\newtheorem{definition}{Definition}[section]
\newtheorem{theorem}[definition]{Theorem}
\newtheorem{lemma}[definition]{Lemma}
\newtheorem{corollary}[definition]{Corollary}
\newtheorem{remark}[definition]{Remark}
\newtheorem{proposition}[definition]{Proposition}
\title{Tridiagonal pairs of $q$-Serre type and their linear perturbations}
\author{Aayush Karan}
\date{Harvard University, \\ 86 Brattle Street, Cambridge, MA, USA 02138 \\ {akaran@college.harvard.edu}} \vspace{\baselineskip} \vspace{\baselineskip} \vspace{\baselineskip} 
\begin{document}
\maketitle
\vspace{\baselineskip} 

\begin{abstract}
\noindent
A tridiagonal pair is an ordered pair of diagonalizable linear maps on a nonzero finite-dimensional vector space, that each act on the eigenspaces of the other in a block-tridiagonal fashion. We consider a tridiagonal pair $(A, A^*)$ of $q$-Serre type; for such a pair the maps $A$ and $A^*$ satisfy the $q$-Serre relations. There is a linear map $K$ in the literature that is used to describe how $A$ and $A^*$ are related. We investigate a pair of linear maps $B=A$ and $B^* = tA^* + (1-t)K$, where $t$ is any scalar. Our goal is to find a necessary and sufficient condition on $t$ for the pair $(B, B^*)$ to be a tridiagonal pair. We show that $(B, B^*)$ is a tridiagonal pair if and only if $t \neq 0$ and $P \bigl(  t(q-q^{-1})^{-2}  \bigr)\not=0$, where $P$ is a certain polynomial attached to $(A, A^*)$ called the Drinfel'd polynomial.
\end{abstract}

\vspace{\baselineskip}
\begin{flushleft}
\textbf{Keywords:} Tridiagonal pair; Tridiagonal system; Split sequence; Linear perturbation; Drinfel'd polynomial

\bigskip

\textbf{AMS Subject Classifications:} 17B37.
\end{flushleft}

\newpage

\pagenumbering{arabic}
\section{Introduction}
In this paper we consider a linear algebraic object known
as a tridiagonal pair \cite{pqassoc}. Roughly speaking, a tridiagonal pair is an ordered pair of diagonalizable linear maps $(A, A^*)$ on a nonzero finite-dimensional vector space, that each act on the eigenspaces of the other in a block-tridiagonal fashion. We consider a tridiagonal pair $(A, A^*)$ of $q$-Serre type; for such a pair the $A$ and $A^*$ satisfy the $q$-Serre relations as shown in (\ref{qSer1}) and (\ref{qSer2}) below. There is a linear map $K$ in the literature (see \cite[Section~1.1]{yetanotk} and \cite{k, anotk, kk}) that is used to describe how $A$ and $A^*$ are
related. We investigate a pair of linear maps $(B, B^*)$ of the form \begin{equation}
    \nonumber
    B = A, \qquad \qquad B^* = tA^* + (1-t)K,
\end{equation}
where $t$ is any scalar. Our goal is to find a necessary and sufficient condition on $t$ such that $(B, B^*)$ is a tridiagonal pair. To reach this goal, we associate with $(A, A^*)$ a polynomial $P(x)$ called the Drinfel'd polynomial \cite{drff, drinfeld, anotdrf}. The degree of $P$ plus one is equal to the number of eigenspaces of $A$ and the number of eigenspaces of $A^*$. We show that the pair $(B, B^*)$ is a tridiagonal pair if and only if both \begin{equation}\nonumber t \neq 0, \qquad \qquad P\left(\frac{t}{(q - q^{-1})^2}\right) \neq 0.\end{equation} Our main result is Theorem \ref{maintheorem}.

\bigskip
\noindent
The paper is organized as follows. In Section \ref{one} we make some basic definitions and set some
notation. In Section \ref{pssp} we introduce the notion of a parallel system. In Section \ref{tdsyssss} we recall the notion of a tridiagonal system. In Section \ref{spltdecompo} we recall the split decomposition. In Section \ref{mtdsysysys} we recall the notion of a mock tridiagonal system. In Section \ref{linpert} we introduce the notion of a $t$-linear perturbation of a tridiagonal pair and prove a number of results about it. In Section \ref{dfp} we define the Drinfel'd polynomial and use it to prove our main result.

\section{Preliminaries}\label{one}

In this section we give some basic definitions and set our notation. Let $\mathcal{K}$ denote an algebraically closed field. Throughout this paper, a scalar will refer to an element of $\mathcal{K}$ and every vector space we mention will be understood to be over $\mathcal{K}$. Let $V$ denote a nonzero finite-dimensional vector space. For a linear map $A: V \to V$, an \textit{eigenspace} refers to a nonzero subspace $\{v \in V : Av = \lambda v\}$ for some scalar $\lambda$. This scalar is called the \textit{eigenvalue} for the given eigenspace. The map $A$ is said to be \textit{diagonalizable} whenever $V$ is spanned by the eigenspaces of $A$.

\begin{definition}\rm{(See \cite[Definition~1.1]{pqassoc})}{\label{TD}}
\rm
A \textit{tridiagonal pair} on $V$ is an ordered pair $(A, A^*)$ of linear maps $A:V \to V$ and $A^*:V \to V$ that satisfy the following conditions:

\begin{itemize}
    \item[\rm (i)] $A$ and $A^*$ are diagonalizable.
    
    \item[\rm (ii)] There exists an ordering $\{V_i\}^d_{i=0}$ of the eigenspaces of $A$ such that 
    \begin{equation}
    \nonumber
    A^*V_i \subseteq V_{i-1} + V_{i} + V_{i+1},\end{equation}
    where $0 \leq i \leq d$ and $V_{-1} = V_{d+1} = 0.$
    
    \item[\rm (iii)] There exists an ordering $\{V^*_i\}^{\delta}_{i=0}$ of the eigenspaces of $A^*$ such that 
    \begin{equation}\nonumber
AV^*_i \subseteq V^*_{i-1} + V^*_{i} + V^*_{i+1},\end{equation}
    where $0 \leq i \leq {\delta}$ and $V^*_{-1} = V^*_{{\delta}+1} = 0.$
    
    \item[\rm (iv)] If a subspace $W$ of $V$ is such that $AW \subseteq W$ and $A^*W \subseteq W$, then either $W = 0$ or $W = V$.
\end{itemize}
The above tridiagonal pair is said to be \textit{over} $\mathcal{K}.$

\end{definition}
\begin{flushleft}
With reference to the above definition, it is known that $d = \delta$ (see \cite[Lemma $4.5$]{pqassoc}), so $A$ and $A^*$ have the same number of eigenspaces.  

\bigskip

Note that if $(A, A^*)$ is a tridiagonal pair on $V$, then so is $(A^*, A).$
\begin{remark}\rm
For a tridiagonal pair $(A, A^*)$, call an ordering $\{V_i\}_{i=0}^d$ of the eigenspaces of $A$ \textit{standard} if it satisfies the conditions of Definition \ref{TD}$\rm(ii)$. Observe that a standard ordering is not unique, as the ordering $\{V_{d-i}\}^d_{i=0}$ is also standard. Since $(A^*, A)$ is also a tridiagonal pair, a similar discussion applies to an ordering $\{V^*_i\}_{i=0}^d$ of the eigenspaces of $A^*$.
\end{remark}

For a given tridiagonal pair $(A, A^*)$, observe that a standard ordering of the eigenspaces of $A$ gives an ordering $\{\theta_i\}_{i=0}^d$ of the eigenvalues of $A$ known as an \textit{eigenvalue sequence} of $(A, A^*).$ Looking instead at the tridiagonal pair $(A^*, A)$, we obtain the \textit{dual eigenvalue sequence} $\{\theta^*_i\}_{i=0}^d.$ Note that since a standard ordering is not unique, neither are the eigenvalue and dual eigenvalue sequences. Indeed, $\{\theta_{d-i}\}_{i=0}^d$ is also an eigenvalue sequence, while $\{\theta^*_{d-i}\}_{i=0}^d$ is also a dual eigenvalue sequence.

\end{flushleft}

\begin{flushleft}
The tridiagonal conditions impose a great deal of structure on the linear maps $A$ and $A^*$; for instance, the following relations must always be satisfied.

\end{flushleft}

\begin{theorem} \rm{(See \cite[Theorem $10.1$]{pqassoc})}\label{TDrel}
\em
Let $(A, A^*)$ denote a tridiagonal pair over  $\mathcal{K}$. Then there exist scalars $\beta, \gamma, \gamma^*, \rho, \rho^*$ such that 
\begin{align}
    & [A, A^2A^* - \beta AA^*A + A^*A^2 - \gamma(AA^* + A^*A) - \rho A^*] = 0 \label{TDrel1},
    \\
    & [A^*, A^{*2}A - \beta A^*AA^* + AA^{*2} - \gamma^*(AA^* + A^*A) - \rho^* A] = 0 \label{TDrel2}
\end{align}
where $[X, Y] = XY - YX$.
\end{theorem}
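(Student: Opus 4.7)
The plan is to characterize when the commutator in (\ref{TDrel1}) vanishes by examining its action between the primitive idempotents of $A$, and then to produce suitable scalars $\beta, \gamma, \rho$ using a recurrence on the eigenvalue sequence. The relation (\ref{TDrel2}) will follow from the same argument applied to the tridiagonal pair $(A^*, A)$.

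Fix a standard ordering $\{V_i\}_{i=0}^d$ of the eigenspaces of $A$ with corresponding eigenvalues $\{\theta_i\}_{i=0}^d$, and let $E_i$ denote the primitive idempotent of $A$ projecting onto $V_i$. The tridiagonal condition Definition \ref{TD}(ii) is equivalent to the statement $E_i A^* E_j = 0$ whenever $|i-j| > 1$. Writing
\[
T = A^2 A^* - \beta A A^* A + A^* A^2 - \gamma(AA^* + A^*A) - \rho A^*,
\]
and using $E_i A = \theta_i E_i$ and $A E_j = \theta_j E_j$, a direct expansion yields
\[
E_i [A, T] E_j = (\theta_i - \theta_j)\bigl(\theta_i^2 - \beta \theta_i \theta_j + \theta_j^2 - \gamma(\theta_i+\theta_j) - \rho\bigr) E_i A^* E_j.
\]
Since $\sum_i E_i = I$, the commutator $[A,T]$ vanishes iff each $E_i[A,T]E_j$ does. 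The prefactor kills the case $i = j$, and $E_i A^* E_j = 0$ for $|i-j| > 1$, so the only genuine constraints come from $j = i \pm 1$. Thus (\ref{TDrel1}) reduces to choosing $\beta, \gamma, \rho$ that solve
\[
\theta_i^2 + \theta_{i+1}^2 - \beta\,\theta_i \theta_{i+1} - \gamma(\theta_i + \theta_{i+1}) - \rho = 0, \qquad 0 \le i \le d-1. \qquad (*)
\]

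The crux is to exhibit such scalars. The main ingredient I would establish is the three-term recurrence
\[
\theta_{i-2} - \theta_{i+1} = (\beta + 1)(\theta_{i-1} - \theta_i), \qquad 2 \le i \le d-1,
\]
for some scalar $\beta$ independent of $i$ (together with the analogous recurrence for the $\theta_i^*$ with the \emph{same} $\beta$). This is the standard ``eigenvalue sequence'' structure theorem for tridiagonal pairs, and I would derive it by iterating the block-tridiagonal condition to show $E_i (A^*)^r E_j = 0$ for $|i-j| > r$, then expanding $\mathrm{tr}\bigl(A^r (A^*)^s\bigr)$ type invariants to extract relations among the $\theta_i$. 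Given the recurrence, fix $\beta$ as above, solve the two equations of $(*)$ at $i=0$ and $i=1$ for $\gamma$ and $\rho$, and then verify inductively that the recurrence forces the remaining equations of $(*)$ to hold automatically. This produces the desired scalars for (\ref{TDrel1}); the same derivation applied to $(A^*, A)$ with the same $\beta$ yields (\ref{TDrel2}).

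The main obstacle I anticipate is the three-term recurrence itself: it requires a genuinely global argument exploiting repeated applications of the tridiagonal constraint, rather than the local analysis at a single pair of idempotents. Once it is in place, the reduction from ``$[A,T] = 0$'' to a solvable polynomial system on the $\theta_i$ is essentially linear algebra. A secondary subtlety is ensuring that the parameter $\beta$ extracted from the $\theta_i$-recurrence coincides with the one extracted from the $\theta_i^*$-recurrence; this symmetry of $\beta$ under swapping $A$ and $A^*$ is what permits a single $\beta$ to appear in both (\ref{TDrel1}) and (\ref{TDrel2}).
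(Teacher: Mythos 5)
Note first that the paper does not prove this statement: it is quoted from \cite[Theorem~10.1]{pqassoc}, so there is no in-paper argument to compare against, and your proposal must be judged on its own. Your local computation is correct: $E_i[A,T]E_j = (\theta_i-\theta_j)\bigl(\theta_i^2-\beta\theta_i\theta_j+\theta_j^2-\gamma(\theta_i+\theta_j)-\rho\bigr)E_iA^*E_j$, and since $E_iA^*E_j=0$ for $|i-j|>1$ the relation (\ref{TDrel1}) does reduce to the system $(*)$. It is also true that, for $d\geq 3$, solvability of $(*)$ is \emph{equivalent} to your three-term recurrence: differencing consecutive equations of $(*)$ and dividing by $\theta_{i+1}-\theta_{i-1}\neq 0$ gives $\theta_{i-1}-\beta\theta_i+\theta_{i+1}=\gamma$, differencing again gives the recurrence, and conversely the recurrence lets you define $\gamma$ and $\rho$ and telescope back. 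So the plan is internally consistent.

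The gap is that this equivalence means you have restated the theorem rather than proved it. For $d\geq 3$ the recurrence determines $\beta$ uniquely via $\beta+1=(\theta_0-\theta_3)/(\theta_1-\theta_2)$, so the existence of a scalar $\beta$ satisfying the recurrence --- and, for (\ref{TDrel2}), the \emph{same} $\beta$ for the dual eigenvalue sequence --- is exactly the nontrivial content of Theorem \ref{TDrel}. In the source \cite{pqassoc} this recurrence appears as Theorem~11.1 and is derived as a \emph{consequence} of Theorem~10.1, so invoking it as ``the standard eigenvalue sequence structure theorem'' is circular unless you supply an independent proof. The derivation you sketch does not do this: the vanishing $E_i(A^*)^rE_j=0$ for $|i-j|>r$ is fine, but expanding $\mathrm{tr}\bigl(A^r(A^*)^s\bigr)=\sum_{i,j}\theta_i^r\theta_j^{*s}\,\mathrm{tr}(E_iE^*_j)$ introduces the unknown quantities $\mathrm{tr}(E_iE^*_j)$ (the eigenspaces may have dimension greater than one), and I do not see how these identities isolate a relation among the $\theta_i$ alone. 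As written, the proposal proves the easy implication (recurrence $\Rightarrow$ relations) and leaves the hard one unproved.
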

\begin{remark}\rm{(See \cite{physics}) }\label{q}\rm The relations (\ref{TDrel1}) and (\ref{TDrel2}) are relevant in physics, with certain cases appearing in quantum integrable models and exactly solvable systems in statistical mechanics.
\end{remark}

\begin{flushleft}
For the rest of the paper, we fix a nonzero scalar $q$ that is not a root of unity. Our primary focus is a special case of (\ref{TDrel1}) and (\ref{TDrel2}) known as the $q$-Serre relations. This special case is described as follows. Setting  $\gamma = \gamma^* = \rho = \rho^* = 0$ and $\beta = q^2 + q^{-2}$, the relations of Theorem \ref{TDrel} become
\begin{align}
    & A^3A^* - [3]_qA^2A^*A + [3]_qAA^*A^2 - A^*A^3 = 0, \label{qSer1}
    \\
    & A^{*3}A - [3]_qA^{*2}AA^* + [3]_qA^*AA^{*2} - AA^{*3} = 0, \label{qSer2}
\end{align}
where $[i]_q = \frac{q^i-q^{-i}}{q-q^{-1}}$ for any $i \in \mathbb{Z}.$ 

\begin{definition}
\rm
We say a tridiagonal pair $(A, A^*)$ has \textit{$q$-Serre type} if it satisfies relations (\ref{qSer1}) and (\ref{qSer2}).
\end{definition}

\noindent The next result uses (\ref{qSer1}) and (\ref{qSer2}) to describe the eigenvalue sequences and dual eigenvalue sequences of a tridiagonal pair of $q$-Serre type.
\begin{proposition} \rm{(See \cite[Lemma $4.8$]{qSerre})}\label{seq}
\em
Let $(A, A^*)$ be a tridiagonal pair over $\mathcal{K}$. Then the following are equivalent:
\begin{itemize}
    \item[\rm (i)] $(A, A^*)$ satisfies the $q$-Serre relations.
    \item[\rm (ii)] There exists an eigenvalue sequence $\{\theta_{i}\}_{i=0}^d$ for $(A, A^*)$ and a dual eigenvalue sequence $\{\theta^*_{i}\}_{i=0}^d$ for $(A, A^*)$ such that $\theta_i = q^{2i}\theta_0$ and $\theta^*_i = q^{2i}\theta^*_0$, for $0 \leq i \leq d$.
\end{itemize}
\end{proposition}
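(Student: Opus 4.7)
The plan is to reduce both directions to a single polynomial identity. Fix a standard ordering $\{V_i\}_{i=0}^d$ of the $A$-eigenspaces with eigenvalues $\{\theta_i\}_{i=0}^d$. For $v \in V_i$, substituting $Av = \theta_i v$ into the left-hand side of (\ref{qSer1}) applied to $v$ collects to $p_i(A)\,A^*v$, where
\begin{equation*}
p_i(x) = x^3 - [3]_q\,\theta_i\,x^2 + [3]_q\,\theta_i^2\,x - \theta_i^3.
\end{equation*}
Since $[3]_q = 1 + q^2 + q^{-2}$, Vieta's formulas give $p_i(x) = (x - \theta_i)(x - q^2\theta_i)(x - q^{-2}\theta_i)$.

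For (ii) $\Rightarrow$ (i), assume $\theta_i = q^{2i}\theta_0$ for $0 \leq i \leq d$. Decompose $A^*v = w_{i-1} + w_i + w_{i+1}$ with $w_j \in V_j$, as permitted by the tridiagonal condition. Each $w_j$ is an $A$-eigenvector with eigenvalue $\theta_j \in \{q^{-2}\theta_i,\,\theta_i,\,q^2\theta_i\}$, so $p_i(A)w_j = 0$ and hence $p_i(A)A^*v = 0$. This is precisely the left-hand side of (\ref{qSer1}) evaluated at $v$, and since $V = \sum_i V_i$, the relation (\ref{qSer1}) holds on $V$. The symmetric argument applied to $(A^*, A)$ using a standard ordering of $A^*$-eigenspaces and the dual eigenvalue sequence establishes (\ref{qSer2}).

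For (i) $\Rightarrow$ (ii), the identity $p_i(A)\,A^*v = 0$ is now the input. The irreducibility clause (iv) of Definition~\ref{TD} forces the ``raising'' component $V_i \to V_{i+1}$ of $A^*$ to be nonzero for $i < d$: otherwise $\bigoplus_{j \leq i} V_j$ would be a proper nonzero $(A,A^*)$-invariant subspace. Choosing $v \in V_i$ whose $V_{i+1}$-component of $A^*v$ is nonzero, that component is an $A$-eigenvector with eigenvalue $\theta_{i+1}$ annihilated by $p_i(A)$, so $\theta_{i+1} \in \{\theta_i,\,q^2\theta_i,\,q^{-2}\theta_i\}$, and distinctness of eigenvalues yields $\theta_{i+1}/\theta_i \in \{q^2, q^{-2}\}$. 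A parallel argument with the lowering component gives $\theta_{i-1}/\theta_i \in \{q^2, q^{-2}\}$, and the requirement $\theta_{i-1} \neq \theta_{i+1}$ forces $\theta_{i+1}/\theta_i = \theta_i/\theta_{i-1}$. Hence $\{\theta_i\}$ is geometric with common ratio $\eta \in \{q^2, q^{-2}\}$. If $\eta = q^{-2}$, replace the ordering by the also-standard $\{V_{d-i}\}_{i=0}^d$; after renaming $\theta_0$ one obtains $\theta_i = q^{2i}\theta_0$. The dual sequence is treated identically with the roles of $A$ and $A^*$ swapped.

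The main obstacle is the nonvanishing of the raising and lowering components of $A^*$ on each $V_i$, which is where the irreducibility axiom (iv) enters. Once this structural input is secured, the remainder is a mechanical consequence of the factorization of $p_i(x)$ together with distinctness of eigenvalues, the latter relying on $q$ not being a root of unity.
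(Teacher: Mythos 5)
The paper does not prove Proposition \ref{seq}; it is quoted verbatim from \cite[Lemma~4.8]{qSerre}, so there is no in-paper argument to compare against. Your proof is correct and is essentially the standard one from that reference: write the $q$-Serre relation applied to $v\in V_i$ as $p_i(A)A^*v$ with $p_i(x)=(x-\theta_i)(x-q^2\theta_i)(x-q^{-2}\theta_i)$, use irreducibility (Definition \ref{TD}(iv)) to force the raising and lowering components of $A^*$ to be nonzero, and read off the eigenvalue ratios, with $q^4\neq 1$ supplying the needed distinctness. The only point worth tightening is the implicit division by $\theta_i$: if some $\theta_i$ were zero then $p_i(\theta_{i\pm 1})=\theta_{i\pm 1}^3=0$ would force $\theta_{i\pm 1}=0=\theta_i$, contradicting distinctness of eigenvalues, so all $\theta_i$ are nonzero and the ratios $\theta_{i+1}/\theta_i$ are well defined.
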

\end{flushleft}

\section{Parallel Systems and Split Sequences}\label{pssp}
In this section we present some basic linear algebraic constructions and corresponding results. We begin with some notation. Let $x$ denote an indeterminate, and let $\mathcal{K}[x]$ denote the algebra of polynomials in $x$ that have all coefficients in $\mathcal{K}$.
\begin{definition}
\rm
Given scalars $\{\theta_i\}_{i=0}^d$ and $\{\theta^*_i\}_{i=0}^d$, we define some polynomials in $\mathcal{K}[x]$ as follows. For $0 \leq i \leq d$,
\begin{align}
& \eta_i = (x - \theta_d)(x - \theta_{d-1})\cdots(x - \theta_{d-i+1})\label{good1},
\\ 
& \eta^*_i = (x - \theta^*_d)(x - \theta^*_{d-1})\cdots(x - \theta^*_{d-i+1})\label{good2},
\\
& \tau_i = (x - \theta_0)(x - \theta_{1})\cdots(x - \theta_{i-1})\label{good3},
\\ 
& \tau^*_i = (x - \theta^*_0)(x - \theta^*_{1})\cdots(x - \theta^*_{i-1})\label{good4}.
\end{align}
\end{definition}
\noindent
A straightforward algebraic calculation confirms the following.
\begin{lemma}\rm{(See \cite[Lemma $5.5$]{paramarray})}
\em
We have both
\begin{equation}\label{polysum}
    \eta_d = \sum_{i=0}^d \eta_{d-i}(\theta_0)\tau_i, \qquad \qquad \qquad \eta^*_d = \sum_{i=0}^d \eta^*_{d-i}(\theta^*_0)\tau^*_i.
\end{equation}
\end{lemma}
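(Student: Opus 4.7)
My plan is to prove this by induction on $d$, noting that the symmetry of the two identities means it suffices to establish the first; the second then follows by applying the same argument to the dual sequence $\{\theta^*_i\}_{i=0}^d$. The base case $d = 0$ is immediate, as both $\eta_0$ and $\tau_0$ equal $1$ (empty products), so both sides of the identity reduce to $1$.

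For the inductive step, my key idea is to relate the identity for the sequence $\theta_0, \ldots, \theta_d$ to the analogous identity for the truncated sequence $\theta_0, \theta_1, \ldots, \theta_{d-1}$. Let $\tilde\eta_j$ denote the polynomial defined as in (\ref{good1}) but using this shorter sequence. For $0 \le i \le d-1$, I would factor $(x - \theta_d)$ out of $\eta_{d-i}(x)$ to obtain $\eta_{d-i}(x) = (x - \theta_d)\,\tilde\eta_{d-1-i}(x)$, while noting that $\tau_i$ is unchanged on passage to the shorter sequence, since its defining product in (\ref{good3}) involves only $\theta_0, \ldots, \theta_{i-1}$. After separating off the $i = d$ term, this rewrites the right-hand side as $\tau_d(x) + (\theta_0 - \theta_d)\sum_{i=0}^{d-1}\tilde\eta_{d-1-i}(\theta_0)\,\tau_i(x)$.

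Next, I would apply the inductive hypothesis to the shorter sequence, so that the remaining sum collapses to $\tilde\eta_{d-1}(x) = (x-\theta_{d-1})(x-\theta_{d-2})\cdots(x-\theta_1)$. Combined with the identity $\tau_d(x) = (x-\theta_0)\,\tilde\eta_{d-1}(x)$, which is immediate from the definitions, the right-hand side becomes $[(x-\theta_0) + (\theta_0 - \theta_d)]\,\tilde\eta_{d-1}(x) = (x-\theta_d)\,\tilde\eta_{d-1}(x) = \eta_d(x)$, completing the induction.

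The main (and essentially only) technical concern I anticipate is careful indexing bookkeeping to ensure that $\eta_{d-i}$ for the full sequence matches $(x - \theta_d)\,\tilde\eta_{d-1-i}$ for the shorter one, and that the evaluation $\eta_{d-i}(\theta_0) = (\theta_0 - \theta_d)\,\tilde\eta_{d-1-i}(\theta_0)$ factors cleanly out of the summation; otherwise the computation is entirely routine, consistent with the authors' description of it as a ``straightforward algebraic calculation.''
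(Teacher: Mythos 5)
Your induction is correct, and the indexing works out exactly as you claim: with $\tilde\eta_j$ built from $\theta_0,\dots,\theta_{d-1}$ one has $\eta_{d-i}=(x-\theta_d)\,\tilde\eta_{d-1-i}$ for $0\le i\le d-1$, $\tau_i$ is unchanged for $i\le d-1$, and $\tau_d=(x-\theta_0)\,\tilde\eta_{d-1}$, so the telescoping step $(x-\theta_0)+(\theta_0-\theta_d)=(x-\theta_d)$ closes the argument. The paper itself supplies no proof, merely citing \cite[Lemma~5.5]{paramarray} and calling the verification a straightforward algebraic calculation, so your argument is a legitimate and complete way of carrying out that calculation; nothing is missing.
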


\noindent
Next we have some comments about linear maps. Let $A:V\to V$ be a diagonalizable linear map with eigenvalues $\theta_0, \theta_1, ..., \theta_d$. For $0 \leq i \leq d$, consider the \textit{primitive idempotent}
\begin{equation}\label{primid}E_i = \prod_{j \neq i} \frac{A - \theta_jI}{\theta_i - \theta_j}.\end{equation}
It is clear from (\ref{primid}) that $E_i$ acts as the identity on the $\theta_i$-eigenspace of $A$ and acts as the zero map on the other eigenspaces of $A$. From this observation, $E_i$ is the projection map onto the $\theta_i$-eigenspace so that 
\begin{equation}\nonumber
E_iV = \{v \in V : Av = \theta_iv\}.\end{equation}
The following relations are immediate:
\begin{align}
& \sum_{i = 0}^d E_i = I,
\\ 
& E_iE_j = \delta_{ij}E_i \qquad\qquad (0 \leq i, j \leq d),
\\
& E_iA = AE_i = \theta_iE_i \qquad (0 \leq i \leq d) \label{comm}.
\end{align}

\begin{definition}\label{parsys}
\rm
A \textit{parallel system} on $V$ is a sequence
\begin{equation}
\nonumber
    \Phi = (A; \{E_i\}^d_{i=0}; A^*; \{E^*_i\}^d_{i=0})
\end{equation}
satisfying the following conditions:
\begin{itemize}
    \item[\rm (i)] $A$ and $A^*$ are diagonalizable linear maps from $V$ to itself.
    \item[\rm (ii)] $\{E_{i}\}_{i=0}^d$ is an ordering of the primitive idempotents of $A.$
    \item[\rm (iii)] $\{E^*_{i}\}_{i=0}^{d}$ is an ordering of the primitive idempotents of $A^*.$
\end{itemize}
\end{definition}
\noindent We fix a parallel system $\Phi = (A; \{E_i\}^d_{i=0}; A^*; \{E^*_i\}^d_{i=0})$ on $V$ for the remainder of the section. For $0 \leq i \leq d$, let $\theta_i$ (resp. $\theta^*_i$) denote the eigenvalue of $A$ (resp. $A^*$) corresponding to $E_i$ (resp. $E^*_i$). 
From (\ref{primid}), we obtain
\begin{align}
    & E_0 = \frac{\eta_d(A)}{\eta_d(\theta_0)}, \qquad  \qquad \qquad E^*_0 = \frac{\eta^*_d(A)}{\eta^*_d(\theta^*_0)} \label{nstrpol},
    \\
    & E_d = \frac{\tau_d(A)}{\tau_d(\theta_d)}, \qquad  \qquad \qquad E^*_d = \frac{\tau^*_d(A)}{\tau^*_d(\theta^*_d)} \label{strpol}.
\end{align}

\begin{definition}\label{sharps}
\rm
 The parallel system $\Phi$ is said to be \textit{sharp} whenever $\dim(E^*_0V) = 1.$
\end{definition}

\noindent For the remainder of this section, assume $\Phi$ is sharp. Fix an integer $i$ with $0 \leq i \leq d$ and consider the map
\begin{equation}\nonumber
E^*_0\tau_i(A)\end{equation} on $E^*_0V$. Note that this is a linear map from $E^*_0V$ to itself. Since $E^*_0V$ has dimension $1$, it follows that this map acts as multiplication by a scalar $\chi_i$.

\begin{lemma}\label{eqtr}
Let $E:V \to V$ and $F:V \to V$ denote two linear maps such that $E^2 = E$ and \text{\rm dim}$(EV) = 1$. Then 
\begin{enumerate}
    \item [\rm (i)] $EFE = c E$, where $c = \text{\rm tr}(FE)$.
    
    \item [\rm (ii)] $\text{\rm tr}(FE)$ is nonzero if and only if $EFE$ is nonzero.
\end{enumerate}
\end{lemma}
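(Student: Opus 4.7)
The plan is to exploit the rank-one nature of $E$. Since $E^2=E$ and $\dim(EV)=1$, we have the decomposition $V = EV \oplus \ker E$, with $EV = \mathcal{K}v$ for some fixed nonzero $v$ satisfying $Ev=v$. Both assertions reduce to the observation that an operator of the form $EFE$ is automatically zero off the one-dimensional subspace $EV$ and acts as a scalar on it.

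For part (i), since $EFv \in EV$ there is a unique scalar $c$ with $EFv = cv$. For arbitrary $u\in V$ define $\phi(u)\in\mathcal{K}$ by $Eu = \phi(u)v$; this $\phi$ is a linear functional. Then
\begin{equation}\nonumber
EFEu = EF\bigl(\phi(u)v\bigr) = \phi(u)EFv = c\phi(u)v = cEu,
\end{equation}
so $EFE = cE$. It remains to identify $c$ with $\operatorname{tr}(FE)$. Choose a basis of $V$ consisting of $v$ together with any basis of $\ker E$. In this basis, $FE$ sends $v \mapsto Fv$ and annihilates $\ker E$. Writing $Fv = cv+w$ with $w\in\ker E$ (which is possible by the definition of $c$), the matrix of $FE$ has $c$ in the top-left corner and zeros elsewhere on the diagonal. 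Hence $\operatorname{tr}(FE)=c$, as required.

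Part (ii) is then immediate from (i). Since $E$ acts as the identity on the nonzero subspace $EV$, the idempotent $E$ itself is nonzero; therefore $EFE = cE$ vanishes if and only if $c=0$, which by part (i) is the same as $\operatorname{tr}(FE)=0$.

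The only step requiring any care is the trace computation, and even that is routine once one works in a basis adapted to $V = EV \oplus \ker E$; there is no substantial obstacle.
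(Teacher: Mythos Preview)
Your proof is correct and follows essentially the same approach as the paper: both first show $EFE = cE$ by noting that $EF$ acts as a scalar on the one-dimensional space $EV$, then identify $c$ as $\operatorname{tr}(FE)$. The only difference is in the trace computation---where you choose an adapted basis and read off the diagonal, the paper uses the cyclic property $\operatorname{tr}(FE) = \operatorname{tr}(FEE) = \operatorname{tr}(EFE) = c\,\operatorname{tr}(E) = c$, noting that $\operatorname{tr}(E) = 1$ since $E$ is a rank-one idempotent.
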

\begin{proof}
Consider the restriction of the map $EF$ on $EV$, and observe that it must act by scalar multiplication since $\dim(EV) = 1$. Let this scalar be $c$, so that $EFE = cE$. Using $\dim(EV)=1$ and linear algebra
we obtain tr$(E)=1$. Then observe that $\text{tr}(FE) = \text{tr}(FEE) = \text{tr}(EFE) = c \text{tr}(E) = c$ by commutativity of the trace, establishing (i). The proof of (ii) follows immediately from (i).
\end{proof}

\begin{corollary}
For $0 \leq i \leq d$,
\begin{equation}\label{ned}
    \chi_i = \text{\rm{tr}}(\tau_i(A)E^*_0).
\end{equation}
\end{corollary}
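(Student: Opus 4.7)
The plan is to apply Lemma \ref{eqtr} directly, taking $E = E^*_0$ and $F = \tau_i(A)$. First I would verify the hypotheses of the lemma. The map $E^*_0$ is a primitive idempotent of $A^*$, so $(E^*_0)^2 = E^*_0$, and the standing sharpness assumption on $\Phi$ (Definition \ref{sharps}) gives $\dim(E^*_0 V) = 1$. So the lemma applies and yields $E^*_0 \tau_i(A) E^*_0 = c \, E^*_0$, where $c = \operatorname{tr}(\tau_i(A) E^*_0)$.

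Next I would use the definition of $\chi_i$. By construction, $\chi_i$ is the unique scalar such that the restriction of $E^*_0 \tau_i(A)$ to $E^*_0 V$ equals multiplication by $\chi_i$. For an arbitrary $w \in V$, the element $E^*_0 w$ lies in $E^*_0 V$, so
\begin{equation}
\nonumber
E^*_0 \tau_i(A) (E^*_0 w) \;=\; \chi_i \, (E^*_0 w).
\end{equation}
As $w$ ranges over $V$ this gives the operator identity $E^*_0 \tau_i(A) E^*_0 = \chi_i \, E^*_0$.

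Finally I would compare the two expressions for $E^*_0 \tau_i(A) E^*_0$. Since $E^*_0$ is a nonzero operator, equating $\chi_i \, E^*_0 = c \, E^*_0$ forces $\chi_i = c = \operatorname{tr}(\tau_i(A) E^*_0)$, which is precisely (\ref{ned}). There is no real obstacle in this proof; the work has already been done in Lemma \ref{eqtr}, and the corollary is essentially a matter of matching notation. The only point that deserves a brief word is the observation that multiplication-by-$\chi_i$ on the one-dimensional subspace $E^*_0 V$ lifts to the global identity $E^*_0 \tau_i(A) E^*_0 = \chi_i E^*_0$, which holds because $E^*_0$ projects $V$ onto $E^*_0 V$.
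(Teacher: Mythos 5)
Your proof is correct and follows the same route as the paper: apply Lemma \ref{eqtr} with $E = E^*_0$ and $F = \tau_i(A)$ to get $E^*_0\tau_i(A)E^*_0 = \operatorname{tr}(\tau_i(A)E^*_0)\,E^*_0$, identify this with $\chi_i E^*_0$ from the definition of $\chi_i$, and compare scalars. The extra detail you supply (verifying the hypotheses of the lemma and lifting the action on $E^*_0V$ to the operator identity) is exactly what the paper leaves implicit.
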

\begin{proof}
Observe that $E^*_0\tau_i(A)E^*_0 = \chi_i E^*_0$. By Lemma \ref{eqtr}, $E^*_0\tau_i(A)E^*_0 = \text{\rm tr}(\tau_i(A)E^*_0)E^*_0$ as well, so it follows that $\chi_i = \text{\rm tr}(\tau_i(A)E^*_0)$. \end{proof}

\begin{definition}\label{thesplit}
\rm
For $0 \leq i \leq d$, let
\begin{equation}\nonumber
\zeta_i = (\theta^*_0 - \theta^*_1)(\theta^*_0 - \theta^*_2)\cdots(\theta^*_0 - \theta^*_i)\chi_i.\end{equation}
We call the sequence $\{\zeta_i\}_{i=0}^d$ the \textit{split sequence} of $\Phi.$ Observe that $\zeta_0 = 1.$
\end{definition}

\begin{lemma}\label{mtdtr}
The split sequence $\{\zeta_i\}_{i = 0}^d$ of $\Phi$ satisfies
\begin{align}
& \zeta_d = \eta^*_d(\theta^*_0)\tau_d(\theta_d){\rm{tr}}(E_dE^*_0) \label{big1},
\\
& \sum_{i = 0}^d \eta_{d-i}(\theta_0)\eta^*_{d-i}(\theta^*_0)\zeta_i = \eta^*_d(\theta^*_0)\eta_d(\theta_0){\rm{tr}}(E_0E^*_0) \label{big2}.
\end{align}
\end{lemma}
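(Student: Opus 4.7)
I would prove the two identities separately, both by direct manipulation of the definitions. For (\ref{big1}), the plan is to expand $\zeta_d$ according to Definition \ref{thesplit} and observe that the scalar prefactor $(\theta^*_0-\theta^*_1)(\theta^*_0-\theta^*_2)\cdots(\theta^*_0-\theta^*_d)$ equals $\eta^*_d(\theta^*_0)$, since by (\ref{good2}) both products collect the same factors $\prod_{j=1}^d(\theta^*_0-\theta^*_j)$, just in opposite orders. It then remains to evaluate $\chi_d$. Rearranging (\ref{strpol}) gives $\tau_d(A) = \tau_d(\theta_d)E_d$, so (\ref{ned}) together with linearity of trace yields $\chi_d = \tau_d(\theta_d)\text{tr}(E_d E^*_0)$, and combining gives (\ref{big1}).

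For (\ref{big2}), the first move is a telescoping observation: for each $0 \leq i \leq d$,
\[
\eta^*_{d-i}(\theta^*_0)\cdot(\theta^*_0-\theta^*_1)\cdots(\theta^*_0-\theta^*_i) \;=\; \prod_{j=1}^d(\theta^*_0-\theta^*_j) \;=\; \eta^*_d(\theta^*_0),
\]
because the first factor is $\prod_{j=i+1}^d(\theta^*_0-\theta^*_j)$ by (\ref{good2}). By Definition \ref{thesplit} this says $\eta^*_{d-i}(\theta^*_0)\zeta_i = \eta^*_d(\theta^*_0)\chi_i$. Thus $\eta^*_d(\theta^*_0)$ factors out of the left side of (\ref{big2}), reducing the problem to
\[
\sum_{i=0}^d \eta_{d-i}(\theta_0)\chi_i \;=\; \eta_d(\theta_0)\,\text{tr}(E_0 E^*_0).
\]

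To finish, I would substitute $x \mapsto A$ in the polynomial identity $\eta_d = \sum_{i=0}^d \eta_{d-i}(\theta_0)\tau_i$ from (\ref{polysum}), multiply on the right by $E^*_0$, and take trace. By (\ref{ned}) and linearity the right side becomes $\sum_{i=0}^d \eta_{d-i}(\theta_0)\chi_i$. On the other hand, rearranging (\ref{nstrpol}) gives $\eta_d(A) = \eta_d(\theta_0)E_0$, so the left side becomes $\eta_d(\theta_0)\text{tr}(E_0 E^*_0)$. This is exactly the reduced identity, completing (\ref{big2}).

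The proof should be essentially mechanical: the only conceptual observations are the telescoping identity for the dual eigenvalue factors and the recognition that a single insertion of (\ref{polysum}) inside the trace simultaneously produces the $\chi_i$ sum on one side and collapses to $E_0$ on the other. I do not anticipate any serious obstacle.
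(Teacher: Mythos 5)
Your proposal is correct and follows essentially the same route as the paper: both identities rest on (\ref{strpol})/(\ref{nstrpol}), the trace formula (\ref{ned}), and a single insertion of the polynomial identity (\ref{polysum}) evaluated at $A$ inside the trace. The only cosmetic difference is that you factor $\eta^*_d(\theta^*_0)$ out of the left side of (\ref{big2}) at the start via the telescoping identity, whereas the paper performs the equivalent splitting of $\eta^*_d(\theta^*_0)$ at the end; the computation is the same.
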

\begin{proof}
To obtain (\ref{big1}), observe that $\zeta_d = (\theta^*_0 - \theta^*_1)(\theta^*_0 - \theta^*_2)$ $\cdots$ $(\theta^*_0 - \theta^*_d)\chi_d$ $=$ $\eta^*(\theta^*_0)\chi_d$. From (\ref{ned}), we have $\chi_d = \text{\rm{tr}}(\tau_d(A)E^*_0)$, so (\ref{strpol}) yields (\ref{big1}), as desired.
\\
\noindent To obtain (\ref{big2}), from (\ref{nstrpol}) we have that
$\eta_d(A) = {\eta_d(\theta_0)}E_0,$
so 
\begin{equation}\nonumber
\eta^*_d(\theta^*_0)\eta_d(\theta_0){\rm{tr}}(E_0E^*_0) = \eta^*_d(\theta^*_0){\rm{tr}}(\eta_d(A)E^*_0).
\end{equation}
Now, from the equation on the left in (\ref{polysum}), we can expand the previous expression into
\begin{equation}\nonumber
    \sum_{i=0}^d \eta^*_d(\theta^*_0)\eta_{d-i}(\theta_0)\text{\rm{tr}}(\tau_i(A)E^*_0) = \sum_{i=0}^d \eta^*_d(\theta^*_0)\eta_{d-i}(\theta_0)\chi_i,
\end{equation}
 using (\ref{ned}) in the last simplification. By splitting $\eta^*_d(\theta^*_0) = \eta^*_{d-i}(\theta^*_0) (\theta^*_0 - \theta^*_1)(\theta^*_0 - \theta^*_2)\cdots(\theta^*_0 - \theta^*_i)$ and using the definition of the split sequence, we obtain (\ref{big2}).
\end{proof}

\begin{definition}
\rm
The \textit{parameter array} for $\Phi$ is the sequence $(\{\theta_i\}_{i=0}^d, \{\theta^*_i\}_{i=0}^d,$ $ \{\zeta_i\}_{i=0}^d)$.
\end{definition}

\section{Tridiagonal Systems}\label{tdsyssss}
We now recall an object known as a tridiagonal system that conveniently packages a tridiagonal pair along with a standard ordering of its eigenspaces.
\begin{definition}\rm{(See \cite[Definition~2.1]{pqassoc})}\label{TDsys}
\rm
A \textit{tridiagonal system} on $V$ is a sequence 
\begin{equation}\nonumber\Phi = (A; \{E_{i}\}_{i=0}^d; A^*; \{E^*_{i}\}_{i=0}^{d})\end{equation}
satisfying the following conditions:
\begin{itemize}
    \item[\rm (i)] $A$ and $A^*$ are diagonalizable linear maps from $V$ to itself.
    \item[\rm (ii)] $\{E_{i}\}_{i=0}^d$ is an ordering of the primitive idempotents of $A.$
    \item[\rm (iii)] $\{E^*_{i}\}_{i=0}^{d}$ is an ordering of the primitive idempotents of $A^*.$ 
    \item[\rm (iv)] $E_iA^*E_j = 0$ if $|i - j| > 1$ and $0 \leq i, j \leq d$.
    \item[\rm (v)] $E^*_iAE^*_j = 0$ if $|i - j| > 1$ and $0 \leq i, j \leq d.$
    \item[\rm (vi)] If a subspace $W$ of $V$ is such that $AW \subseteq W$ and $A^*W \subseteq W$, then either $W = 0$ or $W = V$.
\end{itemize}

\end{definition}

\begin{remark}\rm\label{rela}
From the definition above, it is straightforward to observe that if $(A; \{E_{i}\}_{i=0}^d;$ $A^*; \{E^*_{i}\}_{i=0}^{d})$ is a tridiagonal system, then so are $(A; \{E_{d-i}\}_{i=0}^d; A^*; \{E^*_{i}\}_{i=0}^{d})$, $(A; \{E_{i}\}_{i=0}^d;$ $A^*;$ $\{E^*_{d - i}\}_{i=0}^{d})$, and $(A; \{E_{d-i}\}_{i=0}^d; A^*; \{E^*_{d - i}\}_{i=0}^{d})$. These are called \textit{relatives} of the original tridiagonal system. 
\end{remark}

\begin{definition}
\rm
Referring to Definition \ref{TDsys}, we say $d$ is the \textit{diameter} of the tridiagonal system $\Phi.$
\end{definition}

\begin{flushleft}
The resemblance between Definitions \ref{TD} and \ref{TDsys} indicates the close relation between the notions of tridiagonal pairs and tridiagonal systems, as will be described in the following lemma.
\end{flushleft}

\begin{lemma}\rm{(See \cite[Lemma $2.2$ and Lemma $2.3$]{pqassoc})}\label{dual}
\em
Let $\Phi = (A; \{E_{i}\}_{i=0}^d; A^*; \{E^*_{i}\}_{i=0}^{d})$ denote a tridiagonal system on $V$. Then $(A, A^*)$ is a tridiagonal pair on $V.$ Conversely, if $(A, A^*)$ is a tridiagonal pair on $V$, then $(A; \{E_{i}\}_{i=0}^d; A^*;$ $\{E^*_{i}\}_{i=0}^{d})$ is a tridiagonal system, where $\{E_iV\}_{i=0}^d$ is a standard ordering of the $A$-eigenspaces and $\{E^*_iV\}_{i=0}^{d}$ is a standard ordering of the $A^*$-eigenspaces.
\end{lemma}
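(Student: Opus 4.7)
The plan is to treat this as a routine equivalence between two equivalent packagings of the same structure, and verify matching conditions one by one. Observe that in both Definition \ref{TD} and Definition \ref{TDsys}, the diagonalizability of $A$ and $A^*$ and the irreducibility condition on invariant subspaces appear verbatim, so these transfer in both directions with no work. The substantial content lies in translating between the ``block-tridiagonal action'' formulation of conditions \ref{TD}(ii), (iii) and the ``vanishing-sandwich'' formulation of conditions \ref{TDsys}(iv), (v). The bridge in both directions is the identity $v = \sum_{j=0}^d E_j v$ together with the fact that $E_j$ is the projection onto the $j$-th eigenspace.

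For the forward direction I would start from a tridiagonal system and set $V_i := E_i V$ and $V_i^* := E_i^* V$. To verify \ref{TD}(ii), I would fix $v \in V_i$ (so $E_i v = v$) and expand
\begin{equation}\nonumber
A^* v \;=\; \Bigl(\sum_{j=0}^d E_j\Bigr) A^* v \;=\; \sum_{j=0}^d E_j A^* E_i v.
\end{equation}
By condition \ref{TDsys}(iv) the terms with $|j - i| > 1$ vanish, leaving a vector in $E_{i-1}V + E_i V + E_{i+1} V = V_{i-1} + V_i + V_{i+1}$. Condition \ref{TD}(iii) follows by the same argument applied to $A$ with \ref{TDsys}(v).

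For the reverse direction I would start from a tridiagonal pair, pick standard orderings $\{V_i\}$ and $\{V_i^*\}$ as guaranteed by Definition \ref{TD}, and let $E_i$ (resp.\ $E_i^*$) be the primitive idempotent of $A$ (resp.\ $A^*$) corresponding to $V_i$ (resp.\ $V_i^*$). To check \ref{TDsys}(iv), I would apply $E_i A^* E_j$ to an arbitrary $v \in V$: the image of $E_j v$ lies in $V_j$, so by \ref{TD}(ii) the vector $A^* E_j v$ lies in $V_{j-1} + V_j + V_{j+1}$. Since $E_i$ annihilates any eigenspace of $A$ other than $V_i$, we get $E_i A^* E_j v = 0$ whenever $|i - j| > 1$. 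Condition \ref{TDsys}(v) is the symmetric statement, proved identically.

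There is really no serious obstacle here: the only thing to be careful about is keeping the two orderings (of eigenspaces on the tridiagonal pair side, and of primitive idempotents on the tridiagonal system side) consistent under the correspondence $V_i \leftrightarrow E_i V$. All work reduces to inserting a resolution of the identity and invoking one of the two tridiagonal conditions.
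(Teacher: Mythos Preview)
Your argument is correct and is the standard one: the equivalence between the eigenspace-inclusion conditions of Definition~\ref{TD}(ii),(iii) and the idempotent-sandwich conditions of Definition~\ref{TDsys}(iv),(v) is exactly the resolution-of-identity computation you wrote, and the remaining conditions match verbatim. Note that the paper itself does not supply a proof of this lemma; it simply cites \cite[Lemmas~2.2 and~2.3]{pqassoc}, so there is no in-paper argument to compare against. One small point worth making explicit in the converse direction is that the two orderings have the same length $d$; the paper records this as a known fact (from \cite[Lemma~4.5]{pqassoc}) immediately after Definition~\ref{TD}, so it is available to you, but it is the one ingredient that is not purely formal.
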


\begin{definition} 
\rm
We say a tridiagonal system $\Phi = (A; \{E_{i}\}_{i=0}^d; A^*; \{E^*_{i}\}_{i=0}^{d})$ has \textit{$q$-Serre type} if the tridiagonal pair $(A, A^*)$ has $q$-Serre type.
\end{definition}

\noindent The following is a consequence of the assumption that $\mathcal{K}$ is algebraically closed. 

\begin{proposition}\rm{(See \cite[Theorem 1.3]{struct})}\label{dm1}
\em
Suppose $(A; \{E_{i}\}_{i=0}^d; A^*; \{E^*_{i}\}_{i=0}^{d})$ is a tridiagonal system on $V$. Then $\dim(E_0V) =  \dim(E^*_0V) = 1.$
\end{proposition}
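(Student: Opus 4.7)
This proposition is the sharpness theorem for tridiagonal pairs over algebraically closed fields, due to Nomura and Terwilliger \cite{struct}. My plan is a three-step strategy: reduce to a single claim, apply Burnside, and finish via a compression/commutativity argument.

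\textbf{Step 1 (Reduction).} First I would invoke the shape equality $\dim(E_iV) = \dim(E^*_iV)$ for $0 \leq i \leq d$, a structural result from \cite{pqassoc} obtained by comparing the filtrations $\sum_{i \leq k} E_iV$ and $\sum_{i \leq k} E^*_iV$ using the block conditions (iv),(v) of Definition \ref{TDsys}. Combining this with the relatives listed in Remark \ref{rela} (which allow swapping each ordering independently) yields the chain
\begin{equation}\nonumber
\dim(E_0V) \;=\; \dim(E_dV) \;=\; \dim(E^*_0V) \;=\; \dim(E^*_dV).
\end{equation}
It therefore suffices to prove $\dim(E^*_0V)=1$.

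\textbf{Step 2 (Burnside).} Let $\mathcal{A} \subseteq \mathrm{End}(V)$ denote the subalgebra generated by $A$ and $A^*$. Condition (vi) of Definition \ref{TDsys} says that $V$ has no proper nonzero $\mathcal{A}$-invariant subspace, so $V$ is a simple $\mathcal{A}$-module. Since $\mathcal{K}$ is algebraically closed and $V$ is finite-dimensional, Burnside's theorem gives $\mathcal{A}=\mathrm{End}(V)$. Consequently the compression $E^*_0 \mathcal{A} E^*_0$, viewed inside $\mathrm{End}(E^*_0V)$ via the canonical isomorphism $E^*_0 \mathrm{End}(V) E^*_0 \cong \mathrm{End}(E^*_0V)$, exhausts all of $\mathrm{End}(E^*_0V)$.

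\textbf{Step 3 (Commutativity).} Now I would use the tridiagonal relations — in particular $A^*E^*_0 = E^*_0 A^* = \theta^*_0 E^*_0$ and $E^*_jAE^*_k=0$ for $|j-k|>1$ — to argue that $E^*_0 \mathcal{A} E^*_0$ is a \emph{commutative} subalgebra of $\mathrm{End}(E^*_0V)$. Combined with Step 2, this forces $\mathrm{End}(E^*_0V)$ to be commutative and therefore $\dim(E^*_0V)=1$.

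The main obstacle is Step 3. A direct computation shows, for instance,
\begin{equation}\nonumber
E^*_0 A^2 E^*_0 \;=\; (E^*_0 A E^*_0)^2 + E^*_0 A E^*_1 A E^*_0,
\end{equation}
so the compression $E^*_0 \mathcal{A} E^*_0$ is visibly more than polynomials in the single operator $E^*_0 A E^*_0$. Controlling the higher correction terms $E^*_0 A E^*_1 A E^*_0$, $E^*_0 A E^*_1 A E^*_2 A E^*_1 A E^*_0$, etc., and showing that they all commute with one another, requires the split decomposition recalled in Section \ref{spltdecompo}, which gives a basis of $V$ adapted to the filtration $\sum_{i\leq k} E^*_iV$ in which the raising and lowering components of $A$ can be separated and analyzed individually. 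The full bookkeeping is the content of \cite[Theorem 1.3]{struct}, and I would cite that reference rather than reproduce the calculation here.
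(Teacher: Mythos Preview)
The paper does not prove this proposition at all; it simply records the citation to \cite{struct}. Your outline is the correct strategy and is, in fact, the one carried out in that reference: irreducibility plus Burnside gives $E^*_0\,\mathrm{End}(V)\,E^*_0 = \mathrm{End}(E^*_0V)$, and the crux is to show that the compression $E^*_0\mathcal{A}E^*_0$ is commutative. Since both you and the paper ultimately defer to \cite{struct} for the details of that commutativity argument, the two treatments agree; yours is just more explicit about the architecture.

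Two small cautions. First, you invoke the split decomposition from Section~\ref{spltdecompo}, which in this paper's logical order comes \emph{after} Proposition~\ref{dm1}. The existence and basic properties of the split decomposition (Proposition~\ref{skrt}, Proposition~\ref{dmeq}) are imported from \cite{pqassoc} and do not depend on sharpness, so there is no circularity; but the split-sequence material later in that section does use Proposition~\ref{dm1}, so be careful to appeal only to the sharpness-free parts. Second, the split decomposition yields subspaces $U_i$, not a basis---you would still need to choose bases of the $U_i$ to obtain the ``adapted basis'' you describe, and since $\dim U_i$ is not yet known to be~$1$, the resulting matrices are block (not scalar) triangular. This does not break the argument, but the phrasing in your Step~3 slightly overstates what the decomposition hands you.
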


\section{The Split Decomposition}\label{spltdecompo}
Throughout this section we fix a tridiagonal system $\Phi = (A; \{E_{i}\}_{i=0}^d; A^*; \{E^*_{i}\}_{i=0}^{d})$ on $V$ with eigenvalue sequence $\{\theta_i\}_{i=0}^d$ and dual eigenvalue sequence $\{\theta^*_i\}_{i=0}^d.$ We shall be discussing decompositions of $V$. By a \textit{decomposition} of $V$, we mean a sequence of nonzero subspaces whose direct sum is $V$. For example, both $\{E_iV\}_{i=0}^d$ and $\{E^*_iV\}_{i=0}^d$ are decompositions of $V$. Another decomposition of interest to us is called the split decomposition and is described as follows.

\begin{definition}\label{splt}
\rm
A decomposition $\{U_i\}_{i=0}^d$ of $V$ is said to be \textit{{split} with respect to $\Phi$} whenever
\begin{itemize}
    \item[\rm (i)] $(A - \theta_i)U_i \subseteq U_{i+1}$ for $0 \leq i \leq d$ and $U_{d+1} = 0$,
    \item[\rm (ii)] $(A^* - \theta^*_i)U_i \subseteq U_{i-1}$ for $0 \leq i \leq d$ and $U_{-1} = 0.$
\end{itemize}

\end{definition}

\begin{flushleft}
The existence of a unique split decomposition with respect to $\Phi$ is confirmed by the following proposition.
\end{flushleft}

\begin{proposition}\rm{(See \cite[Theorem $4.6$]{pqassoc})}\label{skrt}
\em
Let $U_0, U_1, ..., U_d$ denote any subspaces of $V$. Then the following are equivalent:
\begin{itemize}
    \item[\rm (i)] $U_i = (E^*_0V + \cdots + E^*_iV) \cap (E_iV + \cdots + E_dV)$ for $0 \leq i \leq d.$
    \item[\rm (ii)] $\{U_i\}_{i=0}^d$ is a decomposition of $V$ that is split with respect to $\Phi$.
    \item[\rm (iii)] For $0 \leq i \leq d,$
    \begin{align*}
    & U_i + U_{i+1} + \cdots + U_d = E_iV + E_{i+1}V + \cdots E_dV, \label{eqsub}
    \\
    & U_0 + U_1 + \cdots + U_i = E^*_0V + E^*_1V + \cdots + E^*_iV.
    \end{align*}
\end{itemize}
\end{proposition}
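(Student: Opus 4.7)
The plan is to establish the equivalences via the cyclic chain $\mathrm{(i)}\Rightarrow\mathrm{(ii)}\Rightarrow\mathrm{(iii)}\Rightarrow\mathrm{(i)}$. The unifying observation is that all three formulations are equivalent to the conjunction of the two partial sum identities in (iii) together with the direct sum property $V=U_0\oplus\cdots\oplus U_d$.

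For $\mathrm{(i)}\Rightarrow\mathrm{(ii)}$, I would first verify the split inclusions. Since $U_i\subseteq E_iV+\cdots+E_dV$ and $(A-\theta_i)$ annihilates $E_iV$, we get $(A-\theta_i)U_i\subseteq E_{i+1}V+\cdots+E_dV$. Definition \ref{TDsys}(v) forces $AE^*_jV\subseteq E^*_{j-1}V+E^*_jV+E^*_{j+1}V$, so $AU_i\subseteq E^*_0V+\cdots+E^*_{i+1}V$. Intersecting these containments and applying (i) at index $i+1$ gives $(A-\theta_i)U_i\subseteq U_{i+1}$; the dual argument via Definition \ref{TDsys}(iv) yields $(A^*-\theta^*_i)U_i\subseteq U_{i-1}$. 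To show $\{U_i\}$ is a decomposition of $V$, note that $\sum U_i$ is $A$- and $A^*$-invariant by the split relations and contains the nonzero space $U_0=E^*_0V$, so Definition \ref{TDsys}(vi) forces $\sum U_i=V$; directness then follows from the dimension count described below.

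For $\mathrm{(ii)}\Rightarrow\mathrm{(iii)}$, iterating $(A^*-\theta^*_j)U_j\subseteq U_{j-1}$ shows that $\prod_{j=0}^i(A^*-\theta^*_j)$ annihilates $U_0+\cdots+U_i$, placing this sum inside $\ker\prod_{j=0}^i(A^*-\theta^*_j)=E^*_0V+\cdots+E^*_iV$ by diagonalizability of $A^*$; dually, $U_i+\cdots+U_d\subseteq E_iV+\cdots+E_dV$. The dimension count below promotes both inclusions to equalities. For $\mathrm{(iii)}\Rightarrow\mathrm{(i)}$, the partial sum identities give $U_i\subseteq (U_0+\cdots+U_i)\cap(U_i+\cdots+U_d)=(E^*_0V+\cdots+E^*_iV)\cap(E_iV+\cdots+E_dV)$ immediately; the reverse inclusion requires $\{U_i\}$ to be a direct sum, once more reducing to the dimension count.

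The main obstacle is therefore this single dimension-count step. To address it, set $W_i=U_0+\cdots+U_i$. The flag $0\subseteq W_0\subseteq\cdots\subseteq W_d=V$ is $A^*$-stable by the split relation, and $A^*$ acts as the scalar $\theta^*_i$ on the quotient $W_i/W_{i-1}$. Matching the characteristic polynomial of $A^*|_V$ computed from this filtration against the one coming from diagonalizability of $A^*$ forces $\dim(W_i/W_{i-1})=\dim E^*_iV$; since each $W_i$ is $A^*$-stable and $A^*$ is diagonalizable, an induction on $i$ identifies $W_i$ with $E^*_0V+\cdots+E^*_iV$ and yields $\dim U_i=\dim E^*_iV$. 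The dual argument with $A$ establishes the upper partial sum identity and $\dim U_i=\dim E_iV$; combined with $\sum U_i=V$, these dimension identities force $\sum U_i$ to be direct and upgrade all pending inclusions to equalities.
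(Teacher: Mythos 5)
The paper does not prove this proposition itself; it is imported from \cite[Theorem~4.6]{pqassoc}, so there is no in-paper argument to compare against. Most of your outline is sound: the derivation of the split inclusions from (i) via the two containments and Definition \ref{TDsys}(iv),(v), the use of irreducibility (Definition \ref{TDsys}(vi)) to get $\sum_i U_i=V$, and the filtration/characteristic-polynomial argument identifying $U_0+\cdots+U_i$ with $E^*_0V+\cdots+E^*_iV$ are all correct, as is the observation that everything reduces to directness of $\sum_i U_i$.

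The gap is the step ``yields $\dim U_i=\dim E^*_iV$.'' With $W_i=U_0+\cdots+U_i$, the equalities $\dim(W_i/W_{i-1})=\dim E^*_iV$ and $W_i=W_{i-1}+U_i$ give only $\dim U_i\ge\dim E^*_iV$; the reverse inequality is equivalent to $U_i\cap W_{i-1}=0$, which is precisely the directness you are trying to establish, so the argument is circular at this point. The dual filtration likewise gives only $\dim U_i\ge\dim E_iV$, and these two lower bounds together with $\sum_i U_i=V$ do not force directness: for $U_i$ as in (i) one has $\dim U_i=\sum_{j\le i}\dim E^*_jV+\sum_{j\ge i}\dim E_jV-\dim V$, and the dimension data $\dim E_jV=(1,1,2)$, $\dim E^*_jV=(1,2,1)$ is consistent with every constraint you derive while giving $\sum_i\dim U_i=\dim V+1$. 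Ruling this out is the real content of the theorem; it amounts to showing $D_i:=(E^*_0V+\cdots+E^*_{i-1}V)\cap(E_iV+\cdots+E_dV)=0$ for $1\le i\le d$, equivalently $\dim E_iV=\dim E^*_iV$ (the paper's Proposition \ref{dmeq}, which in the literature is deduced \emph{from} this theorem and so cannot be assumed here). One way to close the gap: check, exactly as you did for the $U_i$, that $(A-\theta_i)D_i\subseteq D_{i+1}$ and $(A^*-\theta^*_{i-1})D_i\subseteq D_{i-1}$ with $D_0=D_{d+1}=0$, so $D:=\sum_{i=1}^d D_i$ is invariant under $A$ and $A^*$; irreducibility gives $D=0$ or $D=V$, and $D=V$ is impossible because the $A^*$-stable filtration $\{D_1+\cdots+D_i\}_{i=1}^d$ would force every root of the characteristic polynomial of $A^*$ to lie in $\{\theta^*_0,\dots,\theta^*_{d-1}\}$, contradicting $E^*_dV\ne0$. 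With $D=0$ in hand, your dimension count and the remaining implications go through as written.
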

\begin{proposition}\rm{(See \cite[Corollary $5.7$]{pqassoc})}\label{dmeq}
\em
Let $\{U_i\}_{i=0}^d$ be the decomposition of $V$ that is split with respect to $\Phi.$ Then 
\begin{equation}\nonumber\dim(E_iV) = \dim(E^*_iV) = \dim(U_i)\end{equation}
for $0 \leq i \leq d.$
\end{proposition}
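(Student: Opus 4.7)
The plan is to deduce the dimension equalities directly from Proposition \ref{skrt}(iii), which already expresses the partial sums $U_0 + \cdots + U_i$ and $U_i + \cdots + U_d$ as sums of $A^*$-eigenspaces and $A$-eigenspaces respectively. The key point is that $\{U_j\}_{j=0}^d$, $\{E_jV\}_{j=0}^d$, and $\{E^*_jV\}_{j=0}^d$ are all decompositions of $V$ in the sense introduced at the start of Section \ref{spltdecompo}, so each of these sums is a direct sum and the dimensions add.

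First I would record the two identities from Proposition \ref{skrt}(iii): for $0 \leq i \leq d$,
\begin{equation}\nonumber
U_i + U_{i+1} + \cdots + U_d = E_iV + E_{i+1}V + \cdots + E_dV,
\end{equation}
\begin{equation}\nonumber
U_0 + U_1 + \cdots + U_i = E^*_0V + E^*_1V + \cdots + E^*_iV.
\end{equation}
Taking dimensions and using that both sides of each line are direct sums, I obtain
\begin{equation}\nonumber
\sum_{j=i}^d \dim(U_j) = \sum_{j=i}^d \dim(E_jV), \qquad \sum_{j=0}^i \dim(U_j) = \sum_{j=0}^i \dim(E^*_jV).
\end{equation}

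Next I would subtract consecutive instances of each identity to telescope. From the first identity at indices $i$ and $i+1$ (with the convention that the sum is empty at $i = d+1$, which is valid because of the boundary conditions $U_{d+1} = 0$ built into Definition \ref{splt}), subtracting gives $\dim(U_i) = \dim(E_iV)$. Similarly, from the second identity at indices $i$ and $i-1$ (with the empty-sum convention at $i = -1$), subtracting gives $\dim(U_i) = \dim(E^*_iV)$. Combining yields the claim.

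There is essentially no main obstacle here, since Proposition \ref{skrt} does all the heavy lifting; the only point requiring any care is handling the boundary cases $i = 0$ and $i = d$, but these are exactly the statements $U_0 + \cdots + U_d = V = E_0V + \cdots + E_dV = E^*_0V + \cdots + E^*_dV$, which hold by definition of a decomposition.
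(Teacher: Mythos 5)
Your argument is correct and complete. Note that the paper does not prove Proposition \ref{dmeq} at all; it simply quotes it from \cite{pqassoc}, so there is no internal proof to compare against. Your derivation is the natural self-contained one: both identities in Proposition \ref{skrt}(iii) involve sums that are direct (the $U_j$ by Proposition \ref{skrt}(ii), the $E_jV$ and $E^*_jV$ because they are eigenspaces of diagonalizable maps for distinct eigenvalues), so dimensions add, and telescoping consecutive instances gives $\dim(U_i)=\dim(E_iV)$ and $\dim(U_i)=\dim(E^*_iV)$. The boundary cases are handled cleanly since the $i=d$ instance of the first identity already reads $U_d=E_dV$ and the $i=0$ instance of the second reads $U_0=E^*_0V$, so the induction anchors without needing any empty-sum convention. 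This is essentially the argument behind Corollary 5.7 of \cite{pqassoc}.
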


\begin{lemma}
Let $\{U_i\}_{i=0}^d$ be the decomposition of $V$ split with respect to $\Phi.$ Then for $0 \leq i \leq d,$ we have
\begin{equation}\label{ladderincA}
    (A-\theta_{i-1}I)\cdots(A-\theta_1I)(A-\theta_0I)U_0 \subseteq U_i 
\end{equation}
and
\begin{equation}
\label{ladderincA*}
    (A^*-\theta^*_1I)(A^*-\theta^*_2I)\cdots(A^*-\theta^*_iI)U_i
    \subseteq U_0.
\end{equation}
\begin{proof}
(\ref{ladderincA}) and (\ref{ladderincA*}) follow after repeated application of inclusions (\rm{i}) and (\rm{ii}) respectively from Definition \ref{splt}.
\end{proof}
\end{lemma}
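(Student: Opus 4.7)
The plan is to prove both inclusions by a direct induction on $i$, with each inductive step invoking a single application of Definition \ref{splt}. Nothing beyond the two defining inclusions of the split decomposition is needed, so I do not expect any real obstacle; the statement is essentially a restatement of the definition iterated $i$ times. The only thing that requires care is the order in which the factors of the operator products act on the relevant subspace.

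For (\ref{ladderincA}) I would induct on $i$. The base case $i=0$ is vacuous: the product is empty and the claim reduces to $U_0 \subseteq U_0$. For the inductive step, assume $(A-\theta_{i-1}I)(A-\theta_{i-2}I)\cdots(A-\theta_0I)U_0 \subseteq U_i$. Then applying $(A-\theta_iI)$ to both sides and using Definition \ref{splt}(i), which gives $(A-\theta_iI)U_i \subseteq U_{i+1}$, yields
\[
(A-\theta_iI)(A-\theta_{i-1}I)\cdots(A-\theta_0I)U_0 \subseteq (A-\theta_iI)U_i \subseteq U_{i+1},
\]
completing the induction.

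For (\ref{ladderincA*}) I would walk the subspace $U_i$ down the split decomposition. The innermost factor of the operator product on the left is $(A^*-\theta^*_iI)$, which by Definition \ref{splt}(ii) sends $U_i$ into $U_{i-1}$. Applying the next factor $(A^*-\theta^*_{i-1}I)$ and invoking Definition \ref{splt}(ii) again sends $U_{i-1}$ into $U_{i-2}$. Iterating this $i$ times, the last factor $(A^*-\theta^*_1I)$ sends $U_1$ into $U_0$, which gives exactly (\ref{ladderincA*}). A clean way to phrase this formally is by downward induction on $j$, proving that $(A^*-\theta^*_{j+1}I)(A^*-\theta^*_{j+2}I)\cdots(A^*-\theta^*_iI)U_i \subseteq U_j$ for each $0 \leq j \leq i$, with $j=i$ as the base case and $j=0$ as the desired conclusion.
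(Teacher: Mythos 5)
Your proof is correct and is exactly the argument the paper intends: the paper's proof simply says the inclusions follow by repeated application of Definition \ref{splt}(i) and (ii), and your two inductions are a careful formalization of that same iteration. No issues.
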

\begin{flushleft}
Let $\{U_i\}_{i=0}^d$ denote the decomposition of $V$ that is split with respect to $\Phi$. By definition of a tridiagonal system and Proposition \ref{dm1}, $\Phi$ is a sharp parallel system, so we may refer to its associated split sequence. We now use the decomposition $\{U_i\}_{i=0}^d$ to interpret this split sequence.

\noindent By Proposition \ref{skrt}, we have $U_0 = E^*_0V$. From (\ref{ladderincA}) and (\ref{ladderincA*}), the subspace $U_0$ is invariant under the map
\begin{equation}(A^* - \theta^*_1I)(A^* - \theta^*_2I)\cdots(A^* - \theta^*_{i}I)(A - \theta_{i-1}I)\cdots(A - \theta_1I)(A - \theta_0I).\label{lad}\end{equation}
Since dim$(U_0) = 1$, it follows that (\ref{lad}) acts on $U_0$ as a scalar multiple of the identity. Next, we determine this scalar. 
 
\begin{lemma}\label{sam}
Let $\{\zeta_i\}_{i=0}^d$ denote the split sequence of $\Phi.$ Then for $0 \leq i \leq d$, $\zeta_i$ is the eigenvalue for the map
\begin{equation}
    (A^* - \theta^*_1I)(A^* - \theta^*_2I)\cdots(A^* - \theta^*_{i}I)(A - \theta_{i-1}I)\cdots(A - \theta_1I)(A - \theta_0I) \label{neweq}
\end{equation}
acting on $U_0.$
\end{lemma}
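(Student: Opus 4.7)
The plan is to unwind the definition of $\zeta_i$, which is built out of $\chi_i$ (the scalar by which $E^*_0\tau_i(A)$ acts on $E^*_0V$), and relate it to the action of the composite map in (\ref{neweq}) on $U_0=E^*_0V$. First, since $A$ commutes with itself, the product $(A-\theta_{i-1}I)\cdots(A-\theta_1I)(A-\theta_0I)$ equals $\tau_i(A)$. So it suffices to compute the action of $(A^*-\theta^*_1I)(A^*-\theta^*_2I)\cdots(A^*-\theta^*_iI)\tau_i(A)$ on $U_0$.

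Pick $v \in U_0$. By the inclusion (\ref{ladderincA}), we have $\tau_i(A)v \in U_i$. Now I invoke Proposition \ref{skrt}(iii): $U_0 + U_1 + \cdots + U_i = E^*_0V + E^*_1V + \cdots + E^*_iV$. Hence I can decompose
\begin{equation}
\nonumber
\tau_i(A)v = w_0 + w_1 + \cdots + w_i, \qquad w_j \in E^*_jV.
\end{equation}
Applying $E^*_0$ picks out $w_0 = E^*_0\tau_i(A)v$, which by the definition of $\chi_i$ equals $\chi_i v$ (since $v \in E^*_0V$ and $E^*_0\tau_i(A)$ acts on $E^*_0V$ as multiplication by $\chi_i$).

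Next, each $w_j$ with $1 \leq j \leq i$ is killed by $(A^*-\theta^*_jI)$, so the product $(A^*-\theta^*_1I)(A^*-\theta^*_2I)\cdots(A^*-\theta^*_iI)$ annihilates $w_1,\dots,w_i$. On $w_0 \in E^*_0V$, each factor $(A^*-\theta^*_jI)$ acts as multiplication by $\theta^*_0-\theta^*_j$. Combining,
\begin{equation}
\nonumber
(A^*-\theta^*_1I)\cdots(A^*-\theta^*_iI)\tau_i(A)v = (\theta^*_0-\theta^*_1)(\theta^*_0-\theta^*_2)\cdots(\theta^*_0-\theta^*_i)\chi_i v = \zeta_i v,
\end{equation}
using Definition \ref{thesplit} in the last step. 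This is exactly the claim.

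There is no real obstacle here: every ingredient is already in hand. The only thing to be careful about is recognizing that the order of the $(A-\theta_jI)$ factors is immaterial and that the splitting of $\tau_i(A)v \in U_i$ across the $E^*_j V$'s (for $j \leq i$) is legitimate, both of which follow immediately from the structural results cited above (Propositions \ref{skrt} and the ladder inclusion (\ref{ladderincA})).
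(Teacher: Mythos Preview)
Your proof is correct. The approach is close in spirit to the paper's but organized a little differently: the paper works at the operator level, using the invariance of $U_0$ under the map (\ref{neweq}) (established via (\ref{ladderincA}) and (\ref{ladderincA*})) to insert an $E^*_0$ at the front, then commutes $E^*_0$ past the $A^*$-factors and collapses $(A^*-\theta^*_jI)E^*_0$ to $(\theta^*_0-\theta^*_j)E^*_0$. You instead decompose $\tau_i(A)v\in U_i\subseteq E^*_0V+\cdots+E^*_iV$ directly into $A^*$-eigenspace components and let the product $\prod_{j=1}^i(A^*-\theta^*_jI)$ kill $w_1,\dots,w_i$ while scaling $w_0$. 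Both arguments encode the same mechanism (the $A^*$-part of (\ref{neweq}) effectively projects onto $E^*_0V$ up to the scalar $\prod_j(\theta^*_0-\theta^*_j)$); your version is slightly more elementary and avoids appealing to the invariance of $U_0$, at the cost of invoking Proposition~\ref{skrt}(iii) for the eigenspace decomposition.
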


\begin{proof}
We must show that the map (\ref{neweq}) and the map \begin{equation}(\theta^*_0 - \theta^*_1)(\theta^*_0 - \theta^*_2)\cdots(\theta^*_0 - \theta^*_i)E^*_0(A - \theta_{i-1}I)\cdots(A - \theta_1I)(A - \theta_0I) \label{newereq}\end{equation}
have the same action on $U_0 = E^*_0V.$ However, since $A^*E^*_0 =  \theta^*_0E^*_0$, we have
\begin{equation}\nonumber
    (A^* - \theta^*_1I)(A^* - \theta^*_2I)\cdots(A^* - \theta^*_{i}I)E^*_0 = (\theta^*_0 - \theta^*_1)(\theta^*_0 - \theta^*_2)\cdots(\theta^*_0 - \theta^*_i)E^*_0,
\end{equation}
so replacing the right hand side of this equality with the left hand side in (\ref{newereq}), we can rewrite (\ref{newereq}) as
\begin{equation}\nonumber(A^* - \theta^*_1I)(A^* - \theta^*_2I)\cdots(A^* - \theta^*_{i}I)E^*_0(A - \theta_{i-1}I)\cdots(A - \theta_1I)(A - \theta_0I).\end{equation}
At the same time, (\ref{comm}) tells us that $E^*_0$ commutes with $A^*$, so we may shift $E^*_0$ to the beginning of the product to obtain
\begin{equation}E^*_0(A^* - \theta^*_1I)(A^* - \theta^*_2I)\cdots(A^* - \theta^*_{i}I)(A - \theta_{i-1}I)\cdots(A - \theta_1I)(A - \theta_0I).\label{shft}\end{equation}
But $U_0 = E^*_0V$ is invariant under (\ref{neweq}) while $E^*_0$ is the identity on $E^*_0V$, so (\ref{shft}) has the same action as (\ref{neweq}) on $U_0$, as desired. 
\end{proof}

We mention some inequalities involving the split sequence of $\Phi$.

\begin{proposition}\rm{(See \cite[Lemma 6.1]{sharp})}\label{nzertr}
\em
We have both
\begin{align}
    & \text{\rm tr}(E_dE^*_0) \neq 0 \label{nz1},
    \\
    & \text{\rm tr}(E_0E_0^*) \neq 0 \label{nz2}.
\end{align}
\end{proposition}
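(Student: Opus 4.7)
The plan is to establish (ii) first by a direct argument using the split decomposition, and then derive (i) as a consequence by applying (ii) to a suitably chosen relative of $\Phi$.

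For (ii), the starting point is Lemma \ref{eqtr}(ii) applied with $E = E^*_0$ and $F = E_0$: since $\dim(E^*_0V) = 1$ by Proposition \ref{dm1}, this reduces the task to showing $E^*_0 E_0 E^*_0 \neq 0$. I will then fix nonzero generators $v_0$ of $E^*_0V$ and $w_0$ of $E_0V$ and write $E_0 v_0 = \alpha w_0$ and $E^*_0 w_0 = \beta v_0$, so that $E^*_0 E_0 E^*_0$ acts on $v_0$ as the scalar $\alpha\beta$. The problem thus splits into two claims: $E^*_0V \not\subseteq \ker E_0$ (i.e.\ $\alpha \neq 0$) and $E_0V \not\subseteq \ker E^*_0$ (i.e.\ $\beta \neq 0$).

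The first claim translates, via Proposition \ref{skrt}(iii), to $U_0 \not\subseteq U_1 + \cdots + U_d$, which is immediate from $\{U_i\}_{i=0}^d$ being a direct sum decomposition of $V$ with $U_0 \neq 0$. For the second claim, I will invoke Lemma \ref{dual} to note that $\Phi^{\vee} := (A^*; \{E^*_i\}_{i=0}^d; A; \{E_i\}_{i=0}^d)$ is itself a tridiagonal system, and then apply Proposition \ref{skrt}(iii) to its split decomposition, whose zeroth component is $E_0V$, to reduce once more to the same triviality about direct sums.

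For (i), I will apply (ii) to the relative $\Phi' = (A; \{E_{d-i}\}_{i=0}^d; A^*; \{E^*_i\}_{i=0}^d)$, which is a tridiagonal system by Remark \ref{rela}. Its zeroth $A$-idempotent is $E_d$ and its zeroth $A^*$-idempotent is still $E^*_0$, so (ii) applied to $\Phi'$ is precisely (i) for $\Phi$. The only real subtlety anywhere in this plan is correctly identifying the ``zeroth'' component of the split decomposition for the dual and reversed tridiagonal systems; once that bookkeeping is in place, every substantive step reduces to the elementary observation that a nonzero summand of a direct sum cannot lie inside the sum of the remaining summands.
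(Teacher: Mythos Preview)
Your argument is correct. The paper itself does not prove this proposition; it merely imports the result from \cite[Lemma~6.1]{sharp}. So there is no in-paper proof to compare against, but your proposal supplies a clean self-contained argument using only tools already developed in the paper (Lemma~\ref{eqtr}, Proposition~\ref{dm1}, Proposition~\ref{skrt}, Remark~\ref{rela}).

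A couple of minor remarks. First, your appeal to Lemma~\ref{dual} for the claim that $\Phi^{\vee} = (A^*; \{E^*_i\}_{i=0}^d; A; \{E_i\}_{i=0}^d)$ is a tridiagonal system is slightly indirect: it is simpler just to observe directly from Definition~\ref{TDsys} that conditions (iv) and (v) swap under the exchange $(A,\{E_i\}) \leftrightarrow (A^*,\{E^*_i\})$ while (i)--(iii) and (vi) are symmetric. Second, in identifying the zeroth component of the split decomposition of $\Phi^{\vee}$, you are implicitly using Proposition~\ref{skrt}(i), which gives $U^{\vee}_0 = E_0V \cap (E^*_0V + \cdots + E^*_dV) = E_0V$; this is exactly the bookkeeping you flagged, and it goes through without issue. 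With these two points made explicit, the proof is complete.
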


\begin{corollary}\label{nozer}\rm{(See \cite[Corollary $8.3$]{sharp})}
\em
The split sequence $\{\zeta_i\}_{i = 0}^d$ of $\Phi$ satisfies
\begin{align}
& \zeta_d \neq 0 \label{nz3},
\\
& \sum_{i = 0}^d \eta_{d-i}(\theta_0)\eta^*_{d-i}(\theta^*_0)\zeta_i \neq 0 \label{nz4}.\end{align}

\end{corollary}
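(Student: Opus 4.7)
The plan is to derive both nonvanishing claims directly from Lemma \ref{mtdtr} combined with Proposition \ref{nzertr}. The two formulas (\ref{big1}) and (\ref{big2}) express exactly the quantities whose nonvanishing we want, as products of a trace (known to be nonzero by Proposition \ref{nzertr}) with polynomial values like $\eta^*_d(\theta^*_0)$, $\eta_d(\theta_0)$, and $\tau_d(\theta_d)$. So the whole task reduces to showing that each of these auxiliary polynomial evaluations is nonzero.

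For the first claim, I would take (\ref{big1}), which reads $\zeta_d = \eta^*_d(\theta^*_0)\tau_d(\theta_d)\,\text{tr}(E_dE^*_0)$. By (\ref{nz1}) the trace factor is nonzero. Unwinding the definitions (\ref{good2}) and (\ref{good3}), one has
\[
\eta^*_d(\theta^*_0) = \prod_{j=1}^{d}(\theta^*_0 - \theta^*_j), \qquad \tau_d(\theta_d) = \prod_{j=0}^{d-1}(\theta_d - \theta_j).
\]
Each factor on the right is nonzero because $\theta_0,\ldots,\theta_d$ are the distinct eigenvalues of $A$ (they index distinct primitive idempotents $E_i$), and similarly $\theta^*_0,\ldots,\theta^*_d$ are distinct. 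Hence $\zeta_d \neq 0$.

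For the second claim, I would apply the same recipe to (\ref{big2}), which gives
\[
\sum_{i=0}^d \eta_{d-i}(\theta_0)\eta^*_{d-i}(\theta^*_0)\zeta_i = \eta^*_d(\theta^*_0)\eta_d(\theta_0)\,\text{tr}(E_0E^*_0).
\]
Here $\text{tr}(E_0E^*_0)\neq 0$ by (\ref{nz2}), $\eta^*_d(\theta^*_0)\neq 0$ as already noted, and $\eta_d(\theta_0) = \prod_{j=1}^d(\theta_0 - \theta_j)\neq 0$ again by the distinctness of the eigenvalues of $A$. Multiplying these three nonzero scalars yields a nonzero right-hand side, hence (\ref{nz4}).

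There is essentially no obstacle: the two identities of Lemma \ref{mtdtr} have already done the structural work, and the distinctness of eigenvalues is automatic from Definition \ref{parsys} (the $E_i$'s are primitive idempotents of a diagonalizable map, so their associated eigenvalues are pairwise distinct). Thus the corollary reduces to a one-line computation from the two preceding results.
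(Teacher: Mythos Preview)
Your proposal is correct and follows essentially the same route as the paper: both claims are deduced from the identities (\ref{big1}), (\ref{big2}) of Lemma \ref{mtdtr} together with the nonvanishing traces (\ref{nz1}), (\ref{nz2}) of Proposition \ref{nzertr}. The paper's proof is terser, leaving the nonvanishing of the factors $\eta^*_d(\theta^*_0)$, $\eta_d(\theta_0)$, $\tau_d(\theta_d)$ implicit, whereas you spell out that these follow from the distinctness of eigenvalues.
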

\begin{proof}
Observe that line (\ref{nz3}) follows from (\ref{big1}) and (\ref{nz1}), while line (\ref{nz4}) follows from (\ref{big2}) and (\ref{nz2}).
\end{proof}

As described in the following proposition, $\Phi$ is determined up to isomorphism by its parameter array. We refer the reader to \rm{\cite[Definition $5.1$]{struct}} for the definition of an isomorphism of tridiagonal systems.
\begin{proposition}\rm{(See \cite[Theorem 1.6]{struct})}\label{same}
\em
Two tridiagonal systems over $\mathcal{K}$ are isomorphic if and only if they have the same parameter array.
\end{proposition}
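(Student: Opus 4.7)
The forward direction is routine. An isomorphism $\sigma\colon V \to V'$ of tridiagonal systems transports $A, A^*$ and their primitive idempotents in the prescribed orderings, so it preserves the eigenvalue sequences $\{\theta_i\}$ and $\{\theta^*_i\}$. For the split sequence, the formula (\ref{ned}) expresses $\chi_i$ as $\mathrm{tr}(\tau_i(A)E^*_0)$, and both $\tau_i(A)$ and $E^*_0$ are carried to their primed counterparts by $\sigma$; since the trace is preserved under conjugation, each $\chi_i$ is invariant, and by Definition \ref{thesplit} so is each $\zeta_i$.

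The substance lies in the converse. Let $\Phi = (A;\{E_i\};A^*;\{E^*_i\})$ on $V$ and $\Phi' = (A';\{E'_i\};A^{*\prime};\{E^{*\prime}_i\})$ on $V'$ have the same parameter array $(\{\theta_i\},\{\theta^*_i\},\{\zeta_i\})$. Introduce the split decompositions $\{U_i\}$, $\{U'_i\}$ via Proposition \ref{skrt} and fix nonzero $v_0 \in U_0$, $v'_0 \in U'_0$; this is legitimate because both spaces are one-dimensional by Proposition \ref{dm1}. My strategy is to express every vector in $V$ as $w(A,A^*)v_0$ for a word $w$ in two noncommuting variables, to show that the linear dependencies among such expressions are determined entirely by the parameter array, and then to define the isomorphism by $\sigma\colon w(A,A^*)v_0 \mapsto w(A',A^{*\prime})v'_0$. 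The map $\sigma$ then automatically intertwines $A \leftrightarrow A'$ and $A^* \leftrightarrow A^{*\prime}$, and therefore carries $E_i \to E'_i$ and $E^*_i \to E^{*\prime}_i$, since those primitive idempotents are polynomials in $A$, $A^*$ with coefficients determined by the $\theta_i, \theta^*_i$ via (\ref{primid}). Condition (vi) of Definition \ref{TDsys} guarantees that $\{w(A,A^*)v_0\}$ spans $V$, because its span is a nonzero $A$- and $A^*$-invariant subspace.

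To establish the key dependency claim, exploit the ladder structure of Definition \ref{splt}: applications of $(A-\theta_j I)$ raise the split index along the $U_i$ while applications of $(A^*-\theta^*_j I)$ lower it. By Lemma \ref{sam}, each round trip down to $U_0$ produces a scalar multiple of $v_0$ whose coefficient is a polynomial in the $\theta_i, \theta^*_i, \zeta_i$. Combining this with the eigenvalue relations $\prod_i (A-\theta_i I) = 0$ and $\prod_i (A^* - \theta^*_i I) = 0$ yields a rewriting procedure that reduces any word $w(A,A^*)v_0$ to a canonical expression depending only on the parameter array. The principal obstacle is verifying that this rewriting is confluent and that the resulting normal form genuinely captures all linear relations, as opposed to missing some that would force two apparently independent words to coincide in $V$ but not in $V'$. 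Once that is in hand, $\sigma$ is well defined on the spanning set, surjective, and (by the symmetric construction of an inverse $\sigma^{-1}$) bijective, completing the isomorphism of tridiagonal systems.
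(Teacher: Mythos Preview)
The paper does not prove this proposition; it is quoted verbatim from \cite[Theorem~1.6]{struct} and used as a black box (for instance, in the proof of Proposition~\ref{dualpert}). So there is no ``paper's own proof'' to compare against, and your proposal stands as an independent attempt.

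Your forward direction is fine: isomorphisms of tridiagonal systems conjugate $A$, $A^*$, and each $E_i$, $E^*_i$, so the eigenvalue sequences agree, and the trace formula (\ref{ned}) is conjugation-invariant, forcing the split sequences to agree.

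The converse, however, is only a plan, and you yourself flag the gap. Your rewriting scheme relies on three ingredients: the minimal-polynomial relations $\prod_i(A-\theta_iI)=0$ and $\prod_i(A^*-\theta^*_iI)=0$, the ladder inclusions of Definition~\ref{splt}, and the round-trip scalars on $U_0$ from Lemma~\ref{sam}. These are not enough. Once $\dim U_i>1$ for some $i$, the intermediate composites
\[
(A^*-\theta^*_{j+1}I)\cdots(A^*-\theta^*_iI)(A-\theta_{i-1}I)\cdots(A-\theta_jI)\colon U_j\to U_j
\]
are \emph{not} scalars in general, so the relations among words $w(A,A^*)v_0$ cannot be read off from the $\zeta_i$ alone by the procedure you describe. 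You also have no a~priori control on the shape $(\dim U_0,\ldots,\dim U_d)$, and without that you cannot even conclude $\dim V=\dim V'$ at the outset; your argument needs this equality to make the inverse $\sigma^{-1}$ work. The actual proof in \cite{struct} proceeds differently: it builds an abstract algebra $T$ from the parameter data, shows that $E^*_0TE^*_0$ is commutative and generated by explicit elements whose structure constants depend only on the parameter array, and then uses sharpness and irreducibility to pin down the $T$-module $V$ up to isomorphism. That algebraic machinery is what replaces your missing confluence argument.
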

\end{flushleft}

\section{Mock Tridiagonal Systems}\label{mtdsysysys}
To complete our preliminary discussion, we will need the notion of a mock tridiagonal system, which is obtained from the notion of a tridiagonal system by weakening the conditions in a mild way.
\begin{definition}\rm{(See \cite[Definition~1.4]{mtdsys})}\label{MTDsys}
\rm
A \textit{mock tridiagonal system} on $V$ is a sequence 
\begin{equation}\nonumber\Phi = (A; \{E_{i}\}_{i=0}^d; A^*; \{E^*_{i}\}_{i=0}^{d})\end{equation}
satisfying the following conditions:
\begin{itemize}
    \item[\rm (i)] $A$ and $A^*$ are diagonalizable linear maps from $V$ to itself.
    \item[\rm (ii)] $\{E_i\}_{i=0}^d$ is an ordering of the primitive idempotents of $A.$
    \item[\rm (iii)] $\{E^*_i\}_{i=0}^d$ is an ordering of the primitive idempotents of $A^*.$ 
    \item[\rm (iv)] $E_iA^*E_j = 0$ if $|i - j| > 1$ and $0 \leq i, j \leq d$.
    \item[\rm (v)] $E^*_iAE^*_j = 0$ if $|i - j| > 1$ and $0 \leq i, j \leq d.$
    \item[\rm (vi)] The maps $E^*_0E_0E^*_0$ and $E^*_0E_dE^*_0$ are nonzero on $V$.
    \end{itemize}
\end{definition}

\begin{proposition}\rm{(See \cite[Lemma $1.5$]{mtdsys})}
\em
If $\Phi$ is a tridiagonal system on V, then $\Phi$ is a mock tridiagonal system on $V$.
\end{proposition}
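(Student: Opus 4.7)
The plan is to notice that conditions (i)--(v) in Definition \ref{MTDsys} are identical to conditions (i)--(v) in Definition \ref{TDsys}, so the only substantive thing to verify is condition (vi) of Definition \ref{MTDsys}: namely, that $E^*_0 E_0 E^*_0$ and $E^*_0 E_d E^*_0$ are nonzero on $V$.

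First I would invoke Proposition \ref{dm1} to conclude that $\dim(E^*_0 V) = 1$. Since $E^*_0$ is a primitive idempotent, this means $E^*_0$ satisfies the hypotheses on $E$ in Lemma \ref{eqtr} (we have $(E^*_0)^2 = E^*_0$ and $\dim(E^*_0 V) = 1$). Consequently, by Lemma \ref{eqtr}(ii) applied with $E = E^*_0$ and $F = E_0$, the map $E^*_0 E_0 E^*_0$ is nonzero if and only if $\mathrm{tr}(E_0 E^*_0) \neq 0$; similarly, applying Lemma \ref{eqtr}(ii) with $F = E_d$, the map $E^*_0 E_d E^*_0$ is nonzero if and only if $\mathrm{tr}(E_d E^*_0) \neq 0$.

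Finally, I would cite Proposition \ref{nzertr}, which gives exactly that $\mathrm{tr}(E_0 E^*_0) \neq 0$ and $\mathrm{tr}(E_d E^*_0) \neq 0$, completing the verification of condition (vi) of Definition \ref{MTDsys}. There is no real obstacle here since Proposition \ref{nzertr} has already been quoted from \cite{sharp}; the proof is essentially bookkeeping, lining up the trace-based nonvanishing in Proposition \ref{nzertr} with the rank-one idempotent criterion of Lemma \ref{eqtr}. The only point worth being careful about is ensuring that $\Phi$ really is a sharp parallel system so that Lemma \ref{eqtr} applies, which is guaranteed by Proposition \ref{dm1} together with the fact (immediate from Definitions \ref{parsys} and \ref{TDsys}) that every tridiagonal system is in particular a parallel system.
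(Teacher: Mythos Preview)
Your proposal is correct. The paper itself does not supply a proof of this proposition; it merely cites \cite[Lemma~1.5]{mtdsys} as the source. Your argument is a valid self-contained verification within the paper's framework: conditions (i)--(v) are literally the same in Definitions~\ref{TDsys} and~\ref{MTDsys}, and for condition~(vi) you correctly combine Proposition~\ref{dm1} (sharpness), Lemma~\ref{eqtr}(ii) (the rank-one idempotent criterion), and Proposition~\ref{nzertr} (nonvanishing of the traces). One small remark: Lemma~\ref{eqtr} does not itself require $\Phi$ to be a parallel system---it only needs $(E^*_0)^2 = E^*_0$ and $\dim(E^*_0 V)=1$---so your final caveat about checking that $\Phi$ is a sharp parallel system is slightly more than is needed, though harmless.
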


\begin{definition}
\rm
Let $\Phi = (A; \{E_{i}\}_{i=0}^d; A^*; \{E^*_{i}\}_{i=0}^{d})$ denote a mock tridiagonal system on $V$. Then $\Phi$ is said to be \textit{sharp} if $\dim(E^*_0V) = 1.$
\end{definition}

\noindent Let $\Phi$ denote a mock tridiagonal system
that is sharp. Then $\Phi$ is also a sharp parallel system, so we may construct the associated parameter array. 

\begin{proposition}\rm{(See \cite[Theorem $2.7$ and Proposition $3.7$]{mtdsys})}\label{dult}
\em
Let $\Phi$ denote a mock tridiagonal system on $V$
that is sharp. Then there exists a vector space $V^{\ddag}$ such that $\dim(V^{\ddag}) \leq \dim(V)$ and a sharp tridiagonal system $\Phi^{\ddag}$ on $V^{\ddag}$ such that $\Phi$ and $\Phi^{\ddag}$ share the same parameter array. Moreover, if $\dim(V) = \dim(V^{\ddag})$, then $\Phi$ is a tridiagonal system isomorphic to $\Phi^{\ddag}$.
\end{proposition}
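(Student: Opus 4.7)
The plan is to construct $V^{\ddag}$ as a cyclic submodule of $V$ adapted to $\Phi$, restrict the operators to it, and then match parameter arrays using Proposition \ref{same}.

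First I would define $V^{\ddag}$ to be the smallest subspace of $V$ containing $E^*_0V$ that is invariant under both $A$ and $A^*$. Since $\dim(E^*_0V)=1$ by sharpness, this is a cyclic module for the algebra generated by $A$ and $A^*$, and automatically $\dim(V^{\ddag})\leq \dim(V)$. I would set $A^{\ddag} = A|_{V^{\ddag}}$ and $A^{*\ddag} = A^*|_{V^{\ddag}}$, both of which inherit diagonalizability since restrictions of diagonalizable maps to invariant subspaces are diagonalizable. Because each primitive idempotent $E_i$ (resp.\ $E^*_i$) is a polynomial in $A$ (resp.\ $A^*$), the subspaces $E_iV^{\ddag}$ and $E^*_iV^{\ddag}$ are exactly the eigenspaces of $A^{\ddag}$ and $A^{*\ddag}$; write $E^{\ddag}_i$ and $E^{*\ddag}_i$ for the associated primitive idempotents on $V^{\ddag}$.

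Next I would verify the tridiagonal axioms. Conditions (iv) and (v) of Definition \ref{TDsys} transfer directly by restriction from the corresponding conditions on $V$. For the irreducibility condition (vi), let $W\subseteq V^{\ddag}$ be invariant under $A^{\ddag}$ and $A^{*\ddag}$. Then $W$ is stable under $E^{*\ddag}_0$, and $E^{*\ddag}_0W\subseteq E^*_0V$ is either $0$ or all of $E^*_0V$; in the latter case $W$ contains the cyclic generator of $V^{\ddag}$ and hence equals $V^{\ddag}$, while the former case can be excluded using the mock condition that $E^*_0E_0E^*_0$ and $E^*_0E_dE^*_0$ are nonzero, which supplies enough vectors in $W$ to reach $E^*_0V$. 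The essential point that requires care is that all $d+1$ eigenvalues $\theta_0,\ldots,\theta_d$ of $A$ actually survive on $V^{\ddag}$, and likewise for $A^*$; this is where the mock tridiagonal nonvanishing conditions and the sharp parallel system structure do real work, through the split-decomposition-type estimates of Section \ref{spltdecompo}.

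To match parameter arrays, the eigenvalue sequences coincide by construction. For the split sequence, the scalars $\chi_i = \mathrm{tr}(\tau_i(A)E^*_0)$ in equation (\ref{ned}) can be computed equivalently on $V$ or on $V^{\ddag}$: since $E^*_0V\subseteq V^{\ddag}$ and $V^{\ddag}$ is $A$-invariant, the operator $\tau_i(A)E^*_0$ factors through $V^{\ddag}$, so extending a basis of $V^{\ddag}$ to $V$ shows the two traces agree. Thus $\Phi$ and $\Phi^{\ddag}$ share the same split sequence, hence the same parameter array. For the final statement, if $\dim(V)=\dim(V^{\ddag})$ then $V^{\ddag}=V$, so the irreducibility condition just verified for $\Phi^{\ddag}$ is really irreducibility for $\Phi$, upgrading $\Phi$ from mock to tridiagonal; isomorphism with $\Phi^{\ddag}$ then follows from Proposition \ref{same}.

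I expect the main obstacle to be the claim that the cyclic submodule $V^{\ddag}$ retains all $d+1$ eigenvalues of each of $A$ and $A^*$, so that the diameter does not degenerate. One cannot just quote the tridiagonal relations (iv)–(v), because $E^*_0V$ is only one line and a priori its orbit under $A,A^*$ could miss entire eigenspaces; the argument must genuinely exploit condition (vi) of Definition \ref{MTDsys} together with the split-decomposition-style inclusions (\ref{ladderincA}) and (\ref{ladderincA*}) to produce nonzero vectors in every $E_iV\cap V^{\ddag}$ and $E^*_iV\cap V^{\ddag}$.
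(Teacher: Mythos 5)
The paper does not actually prove this proposition; it is imported wholesale from \cite{mtdsys}, so your attempt has to stand on its own, and as written it does not.

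The central gap is the irreducibility step. Taking $V^{\ddag}$ to be the smallest $A,A^*$-invariant subspace containing $E^*_0V$ produces a cyclic module $M=\mathcal{A}E^*_0V$ over the algebra $\mathcal{A}$ generated by $A$ and $A^*$, but a cyclic module need not be irreducible. Your case analysis correctly shows that any proper invariant subspace $W\subseteq M$ must satisfy $E^*_0W=0$ (otherwise $W$ contains the generator line and hence all of $M$), but your claim that this case ``can be excluded using the mock condition'' is unsupported and false in general: the hypotheses $E^*_0E_0E^*_0\neq 0$ and $E^*_0E_dE^*_0\neq 0$ constrain only the one-dimensional line $E^*_0V$ and say nothing about a submodule annihilated by $E^*_0$. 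The standard repair is to note that, precisely because every proper submodule of $M$ is killed by $E^*_0$, the sum $N$ of all proper submodules is itself proper (it cannot contain the generator), so $M$ has a unique maximal proper submodule and one must set $V^{\ddag}=M/N$. This forces changes downstream: the split sequences are matched not by your trace-on-$V$ versus trace-on-$V^{\ddag}$ comparison (which only works for a subspace, not a quotient) but by observing that $E^*_0\tau_i(A)E^*_0=\chi_iE^*_0$ descends to $M/N$ because $E^*_0N=0$; and $\dim V=\dim V^{\ddag}$ then forces $M=V$ and $N=0$, which is exactly what upgrades $\Phi$ from mock to genuine.

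Separately, you name the survival of all $d+1$ eigenvalues of $A$ and of $A^*$ on $V^{\ddag}$ as ``the main obstacle'' and then do not overcome it: asserting that the mock nonvanishing conditions ``do real work'' is not an argument. One genuinely needs, for instance, that $E^*_0E_dE^*_0\neq 0$ forces $\tau_d(A)E^*_0V\neq 0$, so $E_d$ does not vanish on $M/N$, followed by a tridiagonal-shape argument ruling out the vanishing of any intermediate $E_i$, and the dual statement for $A^*$. With its two hardest steps missing, the proposal is an outline rather than a proof.
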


\section{Linear Perturbations of Tridiagonal Pairs}\label{linpert}
We now specialize to the setting of tridiagonal pairs of $q$-Serre type. Let $(A, A^*)$ be such a pair. From Proposition \ref{seq} there exists an eigenvalue sequence $\{q^{2i}\theta_0\}_{i=0}^d$ and a dual eigenvalue sequence $\{q^{2i}\theta^*_0\}_{i=0}^d.$ By factoring out the constants $q^d\theta_0$ and $q^{d}\theta^*_0$ respectively and utilizing Remark \ref{rela}, without loss of generality we may assume the eigenvalue sequence $\{q^{2i-d}\}_{i=0}^d$ and the dual eigenvalue sequence $\{q^{d-2i}\}_{i=0}^d$. From Lemma \ref{dual}, there exists an associated tridiagonal system $\Phi = (A; \{E_{i}\}_{i=0}^d; A^*; \{E^*_{i}\}_{i=0}^{d})$ such that $E_iV$ is the $A$-eigenspace corresponding to the eigenvalue $\theta_i = q^{2i-d}$ and $E^*_iV$ is the $A^*$-eigenspace corresponding to the eigenvalue $\theta^*_i = q^{d - 2i}.$ For the rest of the paper, we shall fix $\Phi$ to this tridiagonal system. To avoid trivialities, we assume that $d \geq 1.$

\begin{flushleft}
Now consider the decomposition $\{U_i\}_{i=0}^d$ of $V$ that is split with respect to $\Phi$. Let $K : V \to V$ be a linear map such that $U_i$ is the eigenspace of $K$ corresponding to the eigenvalue $q^{d-2i}$ for $0 \leq i \leq d$.

\begin{lemma}\label{yeet}
We have the relations
\begin{equation}\label{comrel}\frac{qKA - q^{-1}AK}{q - q^{-1}} = I, \qquad  \qquad \frac{qK^{-1}A^* - q^{-1}A^*K^{-1}}{q - q^{-1}} = I. \end{equation}
\end{lemma}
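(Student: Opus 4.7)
The plan is to reduce both commutation identities to a direct scalar computation on each summand of the split decomposition $\{U_i\}_{i=0}^d$. By construction $K$ acts on $U_i$ as the scalar $q^{d-2i}$. Meanwhile, Definition \ref{splt}(i) says that for $u \in U_i$ we can write $Au = q^{2i-d}u + r(u)$ with $r(u) \in U_{i+1}$ (so $K$ acts on $r(u)$ by $q^{d-2i-2}$), and Definition \ref{splt}(ii) says that for $u \in U_i$ we can write $A^*u = q^{d-2i}u + \ell(u)$ with $\ell(u) \in U_{i-1}$ (so $K^{-1}$ acts on $\ell(u)$ by $q^{2i-d+2}$). In other words, $K^{-1}$ is precisely the ``diagonal part'' of $A$ and $K$ is precisely the ``diagonal part'' of $A^*$ with respect to $\{U_i\}_{i=0}^d$; the other summands $r(u)$ and $\ell(u)$ are the strict raising and lowering parts.

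For the first relation I would apply $qKA - q^{-1}AK$ to an arbitrary $u \in U_i$ using this decomposition. The coefficient of $u$ simplifies to $q \cdot q^{d-2i}\cdot q^{2i-d} - q^{-1}\cdot q^{2i-d}\cdot q^{d-2i} = q - q^{-1}$, while the coefficient of $r(u)$ collapses to $q\cdot q^{d-2i-2} - q^{-1}\cdot q^{d-2i} = q^{d-2i-1} - q^{d-2i-1} = 0$. Summing over $i$ and using $V = \bigoplus_i U_i$ yields $qKA - q^{-1}AK = (q-q^{-1})I$, which rearranges to the first identity. The second identity is handled symmetrically with $K^{-1}$ and $\ell(u)$ in place of $K$ and $r(u)$: the coefficient of $u$ in $qK^{-1}A^*u - q^{-1}A^*K^{-1}u$ is again $q - q^{-1}$, and the coefficient of $\ell(u)$ again cancels to zero.

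The only real obstacle is bookkeeping powers of $q$: once one recognises $K^{\pm 1}$ as the diagonal-with-respect-to-$\{U_i\}$ part of $A$ and $A^*$, the vanishing of the off-diagonal contribution is forced because the $K$-eigenvalues on adjacent summands $U_i$ and $U_{i\pm 1}$ differ by exactly the factor $q^{\mp 2}$, which is precisely what is needed to cancel the $q, q^{-1}$ weighting in the relation. No appeal to the $q$-Serre relations is required; the lemma is a formal consequence of the definitions of $K$ and of the split decomposition.
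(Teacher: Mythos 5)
Your argument is essentially the paper's proof: both verify the relations summand-by-summand on the split decomposition, using that $K$ acts on $U_i$ by $q^{d-2i}$ together with $(A-\theta_iI)U_i\subseteq U_{i+1}$ and $(A^*-\theta^*_iI)U_i\subseteq U_{i-1}$; the paper merely organizes the computation by adding and subtracting $\theta_iK$ (resp.\ $\theta^*_iK^{-1}$) instead of splitting a vector into its diagonal and off-diagonal parts, and likewise makes no appeal to the $q$-Serre relations. One exponent in your write-up is off: since $\ell(u)\in U_{i-1}$ and $K$ acts on $U_{i-1}$ by $q^{d-2i+2}$, the map $K^{-1}$ acts on $\ell(u)$ by $q^{2i-d-2}$, not $q^{2i-d+2}$ (as written, the off-diagonal term would not cancel). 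With the corrected value the coefficient of $\ell(u)$ is $q\cdot q^{2i-d-2}-q^{-1}\cdot q^{2i-d}=0$ as you assert, so the argument goes through.
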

\begin{proof}
It suffices to show that these relations hold on each $U_i$ for $0 \leq i \leq d$. Let $i$ be given. Observe that 
\begin{equation}\label{rew1}
    \frac{qKA - q^{-1}AK}{q - q^{-1}} = \frac{qK(A-\theta_i I) - q^{-1}(A-\theta_i I)K}{q-q^{-1}} + \theta_i K.
\end{equation}
Apply each side of (\ref{rew1}) to $U_i$. Using the fact that on $U_i$, we have $K(A-\theta_i I) =$ $q^{d - 2i - 2}(A - \theta_i I)$ and $K = q^{d-2i} I,$ we find that the right hand side of (\ref{rew1}) is equal to $I$, establishing the relation on the left of (\ref{comrel}). Concerning the relation on the right of (\ref{comrel}), note that 
\begin{equation}\label{rew2}
    \frac{qK^{-1}A^* - q^{-1}A^*K^{-1}}{q - q^{-1}} = \frac{qK^{-1}(A^*-\theta^*_i I) - q^{-1}(A^*-\theta^*_i I)K^{-1}}{q-q^{-1}} + \theta^*_i K^{-1}.
\end{equation}
Applying each side of (\ref{rew2}) to $U_i$ and using the fact that on $U_i$, we have $K^{-1}(A^*-\theta^*_i I) =$ $q^{2i-2-d}(A^* - \theta^*_i I)$ and $K^{-1} = q^{2i-d} I$, we find that the right hand side of (\ref{rew2}) is equal to $I$. This establishes the relation on the right of (\ref{comrel}). 
\end{proof}

\begin{definition}\label{pert}
\rm
For a scalar $t$, the \textit{$t$-linear perturbation} of the tridiagonal pair $(A, A^*)$ with respect to $\Phi$ is the ordered pair $(B, B^*)$ such that
\begin{equation}
    B = A, \qquad \qquad B^* = tA^* + (1-t)K\label{deftp}. 
\end{equation}

\end{definition}

Our goal in this paper is to determine a necessary and sufficient condition on $t$ such that the $t$-linear perturbation of $(A, A^*)$ with respect to $\Phi$ is a tridiagonal pair. 
\end{flushleft}

\begin{lemma}\label{tscale}
Referring to Definition \ref{pert}, for $0 \leq i \leq d$ the following equation holds on $U_i$:
\begin{equation}\nonumber
    B^* - \theta^*_iI = t(A^* - \theta^*_iI).
\end{equation}
\end{lemma}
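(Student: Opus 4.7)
The plan is to simply unpack the definition of $B^*$ from (\ref{deftp}) and use the fact that $K$ acts as a specific scalar on $U_i$. Recall that $\Phi$ was set up so that $\theta^*_i = q^{d-2i}$, and recall that $K$ was defined precisely so that $U_i$ is the eigenspace of $K$ with eigenvalue $q^{d-2i}$. So on $U_i$, the maps $K$ and $\theta^*_i I$ agree.

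Given this observation, I would compute directly on $U_i$:
\begin{equation}\nonumber
B^* - \theta^*_i I = tA^* + (1-t)K - \theta^*_i I = tA^* + (1-t)\theta^*_i I - \theta^*_i I = t(A^* - \theta^*_i I),
\end{equation}
where in the middle equality I have used that $K$ acts as $\theta^*_i I$ on $U_i$, and in the last step I have combined the scalar multiples of $I$. This completes the proof.

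There is no real obstacle here: the identity is an immediate consequence of the construction of $K$ (namely that its eigenvalues on the split decomposition coincide with the dual eigenvalue sequence $\{\theta^*_i\}_{i=0}^d$) together with the defining formula $B^* = tA^* + (1-t)K$. The lemma is essentially a bookkeeping statement preparing the way for later arguments in Section \ref{linpert} where the split decomposition $\{U_i\}_{i=0}^d$ will presumably also serve as the split decomposition for $(B, B^*)$, with the scaling factor $t$ absorbed into $B^* - \theta^*_i I$.
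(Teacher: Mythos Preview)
Your proof is correct and essentially identical to the paper's: both use that $K$ acts as $\theta^*_i I$ on $U_i$, substitute into $B^* = tA^* + (1-t)K$, and simplify.
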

\begin{proof}
Since $K = \theta_i^*I$ on $U_i$, by the equality on the right of (\ref{deftp}) we find that on $U_i$, \[B^* - \theta_i^*I = tA^* + (1-t)\theta_i^*I - \theta_i^*I = t(A^* - \theta_i^*I),\] as desired.
\end{proof}

\begin{lemma}\label{blikea}
Referring to Definition \ref{pert}, for $0 \leq i \leq d$ we have
\begin{equation}\label{bsplit}(B^* - \theta^*_iI)U_i \subseteq U_{i-1}.\end{equation}
\end{lemma}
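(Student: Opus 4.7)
The plan is to combine Lemma \ref{tscale} with the defining property of the split decomposition. By Lemma \ref{tscale}, the action of $B^* - \theta^*_i I$ on $U_i$ coincides with the action of $t(A^* - \theta^*_i I)$ on $U_i$, so it suffices to show that $t(A^* - \theta^*_i I)U_i \subseteq U_{i-1}$.

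For this, I would invoke condition (ii) of Definition \ref{splt}, which is the defining property that $\{U_i\}_{i=0}^d$ is split with respect to $\Phi$: namely, $(A^* - \theta^*_i I)U_i \subseteq U_{i-1}$. Since $U_{i-1}$ is a subspace of $V$, it is closed under scalar multiplication by $t$, hence $t(A^* - \theta^*_i I)U_i \subseteq U_{i-1}$. Combining this with Lemma \ref{tscale} yields (\ref{bsplit}).

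There is no real obstacle here; the lemma is essentially an immediate corollary of Lemma \ref{tscale} together with Definition \ref{splt}(ii), and the only point to be careful about is the boundary case $i = 0$, where the convention $U_{-1} = 0$ from Definition \ref{splt}(ii) is used. In that case Lemma \ref{tscale} gives $(B^* - \theta^*_0 I)U_0 = t(A^* - \theta^*_0 I)U_0 \subseteq t \cdot 0 = 0 = U_{-1}$, so the statement remains valid.
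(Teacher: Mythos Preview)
Your proof is correct and follows exactly the same approach as the paper: the paper's proof simply reads ``Use condition (ii) from Definition \ref{splt} along with Lemma \ref{tscale},'' which is precisely what you do. Your added remark about the boundary case $i=0$ is just extra care and not a divergence from the paper's argument.
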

\begin{proof}

Use condition \rm{(ii)} from Definition \ref{splt} along with Lemma \ref{tscale}.\end{proof}

\begin{lemma}\label{pwrt}
Referring to Definition \ref{pert}, for $0 \leq i \leq d$ the following equation holds on $U_i$:
\begin{equation}\label{eq}
    (B^* - \theta^*_1I) \cdots (B^*-\theta^*_iI) =  t^i  (A^* - \theta^*_1I) \cdots (A^*-\theta^*_iI).
\end{equation}
Moreover, 
\begin{equation}\label{inc}
    (B^* - \theta^*_1I) \cdots (B^* - \theta^*_iI)U_i \subseteq U_0.
\end{equation}
\end{lemma}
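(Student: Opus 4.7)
The plan is to prove the equality (\ref{eq}) by induction on $i$, and then derive the inclusion (\ref{inc}) as an immediate corollary using (\ref{ladderincA*}). The base case $i=0$ is trivial, since both sides reduce to the identity (empty product) acting on $U_0$.

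For the inductive step, assume (\ref{eq}) holds on $U_{i-1}$, and let $v \in U_i$. The rightmost factor of the product $(B^* - \theta^*_1 I)\cdots(B^*-\theta^*_i I)$ is $(B^* - \theta^*_i I)$, and by Lemma \ref{tscale} applied on $U_i$ we have $(B^* - \theta^*_i I)v = t(A^* - \theta^*_i I)v$. By Lemma \ref{blikea}, this vector lies in $U_{i-1}$, where the inductive hypothesis applies. Feeding it into the remaining factors gives
\begin{equation}\nonumber
(B^* - \theta^*_1 I)\cdots(B^* - \theta^*_{i-1} I)\bigl[\, t (A^* - \theta^*_i I) v \,\bigr] = t \cdot t^{i-1} (A^* - \theta^*_1 I)\cdots(A^* - \theta^*_{i-1} I)(A^* - \theta^*_i I) v,
\end{equation}
which is exactly $t^i (A^* - \theta^*_1 I)\cdots(A^* - \theta^*_i I)v$, completing the induction.

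The inclusion (\ref{inc}) then follows at once: on $U_i$ the just-proved equality identifies the left-hand side of (\ref{inc}) with $t^i (A^* - \theta^*_1 I)\cdots(A^* - \theta^*_i I)U_i$, and (\ref{ladderincA*}) tells us that $(A^* - \theta^*_1 I)\cdots(A^* - \theta^*_i I)U_i \subseteq U_0$, so scaling by $t^i$ keeps us inside the subspace $U_0$.

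The only real point of care, which is not an obstacle so much as something that must be respected in the write-up, is that Lemma \ref{tscale} is a \emph{local} identity on each $U_j$ rather than a global operator equation: the substitution $B^* - \theta^*_j I = t(A^* - \theta^*_j I)$ is legitimate only when the operator is being applied to a vector in $U_j$. The induction is organized so that after applying the rightmost factor $(B^* - \theta^*_i I)$ we land in $U_{i-1}$ (via Lemma \ref{blikea}), where the next factor $(B^* - \theta^*_{i-1} I)$ is again governed by the local identity of Lemma \ref{tscale}, and so on down the chain $U_i \to U_{i-1} \to \cdots \to U_0$; this is what licenses pulling out the factor of $t$ at every stage.
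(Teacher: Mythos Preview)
Your proof is correct and is essentially a fleshed-out version of the paper's one-line argument, which simply says that (\ref{eq}) follows from repeated use of Lemmas \ref{tscale} and \ref{blikea} and that (\ref{inc}) follows from repeated use of Lemma \ref{blikea}. The only cosmetic difference is that for (\ref{inc}) you pass through (\ref{eq}) and invoke (\ref{ladderincA*}) on the $A^*$ side, whereas the paper applies Lemma \ref{blikea} directly on the $B^*$ side; both are equally valid and amount to the same chain $U_i \to U_{i-1} \to \cdots \to U_0$.
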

\begin{proof}
The equality (\ref{eq}) follows from repeated use of Lemmas \ref{tscale} and \ref{blikea}. The inclusion (\ref{inc}) follows from repeated use of Lemma \ref{blikea}.
\end{proof}

\begin{lemma}\label{diag}
Referring to Definition \ref{pert}, the map $B^*$ is diagonalizable with eigenvalues $\{\theta^*_i\}_{i=0}^d$. Moreover, for $0 \leq i \leq d$ the dimension of the $\theta^*_i$-eigenspace of $B^*$ is $\dim U_i$.
\end{lemma}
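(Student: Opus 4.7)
The plan is to use the ascending filtration $F_j := U_0 + U_1 + \cdots + U_j$ of $V$ (with $F_{-1} = 0$ and $F_d = V$) to give $B^*$ a block lower-triangular form in which the diagonal block on $U_j$ is the scalar $\theta^*_j$. Since the $\theta^*_j = q^{d-2j}$ are pairwise distinct (as $q$ is not a root of unity), this will be enough to conclude that $B^*$ is diagonalizable with the claimed spectrum and multiplicities.

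The central step is to establish the inclusion $(B^* - \theta^*_j I)F_j \subseteq F_{j-1}$ for $0 \le j \le d$. By linearity it suffices to verify this for a vector $v \in U_k$ with $k \le j$. On $U_k$ we have $Kv = \theta^*_k v$ and, by Definition \ref{splt}(ii), $(A^* - \theta^*_k I)v \in U_{k-1}$, so
\[
    B^* v = tA^* v + (1-t)Kv = \theta^*_k v + t(A^* - \theta^*_k I) v,
\]
and therefore $(B^* - \theta^*_j I) v = (\theta^*_k - \theta^*_j) v + t(A^* - \theta^*_k I) v \in U_k + U_{k-1} \subseteq F_{j-1}$ when $k < j$, while when $k = j$ the first summand vanishes and the second already lies in $U_{j-1} \subseteq F_{j-1}$.

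Iterating this inclusion yields $\prod_{i=0}^d (B^* - \theta^*_i I) V \subseteq F_{-1} = 0$, so the minimal polynomial of $B^*$ divides the separable polynomial $\prod_{i=0}^d (x - \theta^*_i)$. Hence $B^*$ is diagonalizable and its eigenvalues lie in $\{\theta^*_i\}_{i=0}^d$. For the multiplicity count, the same inclusion shows that each $F_j$ is $B^*$-invariant and that $B^*$ acts on the quotient $F_j/F_{j-1}$ by the scalar $\theta^*_j$; consequently the characteristic polynomial of $B^*$ equals $\prod_{j=0}^d (x - \theta^*_j)^{\dim U_j}$, and combined with diagonalizability this gives $\dim \ker(B^* - \theta^*_i I) = \dim U_i$. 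I anticipate no real obstacle: the only subtlety is the correct handling of the boundary cases $k = 0$ and $k = j$ in the containment argument above, which is purely mechanical.
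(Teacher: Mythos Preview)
Your proof is correct and follows essentially the same approach as the paper: both exploit the inclusion $(B^* - \theta^*_i I)U_i \subseteq U_{i-1}$ (the content of Lemmas~\ref{tscale} and~\ref{blikea}, which you rederive inline) to obtain a block-triangular form for $B^*$ relative to $\{U_i\}_{i=0}^d$, from which diagonalizability and the multiplicities follow by the standard minimal/characteristic polynomial argument. One cosmetic slip: with the filtration ordered $U_0 \subset U_0+U_1 \subset \cdots$, the resulting form is block \emph{upper}-triangular (as the paper states), not lower-triangular, though this does not affect the argument.
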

\begin{proof}

First, we show that the map
\begin{equation}\label{minpol}\prod_{i = 0}^d (B^* - \theta_i^*I)\end{equation}
is zero on $V$. To this end, it suffices to show (\ref{minpol}) is the zero on $U_i$ for $0 \leq i \leq d$. Let $i$ be given. Using Lemma \ref{pwrt}, we obtain that 
\begin{equation}\label{ladder}(B^* - \theta_0^*I)(B^* - \theta_1^*I)\cdots(B^* - \theta_{i-1}^*I)(B^* - \theta_i^*I)\end{equation}
is zero on $U_i.$ The map (\ref{ladder}) is a factor of (\ref{minpol}), so (\ref{minpol}) is zero on $U_i.$ 

\noindent
We have shown that the map (\ref{minpol}) is zero on $V$. Let $m(x)$ denote the minimal polynomial of $B^*$. By our above comments, $m(x)$ divides (\ref{minpol}). In particular, $m(x)$ has no repeated roots, so $B^*$ is diagonalizable. It remains to show that for $0 \leq i \leq d,$ the dimension of the $\theta^*_i$-eigenspace of $B^*$ is $\dim U_i.$ We establish this as follows. For $0 \leq i \leq d$ choose a basis for $U_i$, so that the union of all these bases gives a basis of $V.$ Observe that under this basis, by (\ref{bsplit}) the matrix representation of $B^*$ is upper triangular, with the scalar $\theta^*_i$ appearing on the diagonal with multiplicity $\dim U_i$ for $0 \leq i \leq d.$ Then $\theta^*_i$ has multiplicity $\dim U_i$ as a root of the characteristic polynomial of $B^*$. The result follows.
\end{proof}

\begin{flushleft}
For the rest of the paper, let $E'_i$ denote the primitive idempotent of $B^*$ associated with the eigenvalue $\theta^*_i$ for $0 \leq i \leq d.$
\end{flushleft}

\begin{lemma}\label{dim1B}
We have the equality $U_0 = E'_0V$.
\end{lemma}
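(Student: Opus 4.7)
The plan is to show the two inclusions via a dimension count combined with a direct verification that $U_0$ consists of $\theta^*_0$-eigenvectors of $B^*$.

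First I would compute $\dim(E'_0V)$. By Proposition \ref{skrt} we have $U_0 = E^*_0V$, and Proposition \ref{dm1} gives $\dim(E^*_0V) = 1$, so $\dim U_0 = 1$. Applying Lemma \ref{diag} with $i = 0$, the $\theta^*_0$-eigenspace of $B^*$ has dimension $\dim U_0 = 1$, i.e.\ $\dim(E'_0V) = 1$.

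Next I would verify the inclusion $U_0 \subseteq E'_0V$. Applying Lemma \ref{blikea} at $i = 0$ with the convention $U_{-1} = 0$ gives $(B^* - \theta^*_0 I)U_0 \subseteq U_{-1} = 0$. Equivalently, every vector of $U_0$ lies in the $\theta^*_0$-eigenspace of $B^*$, so $U_0 \subseteq E'_0V$. Combined with the fact that both spaces have dimension $1$, we conclude $U_0 = E'_0V$.

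There is no real obstacle here: the only subtle point is being consistent with the convention $U_{-1} = 0$ from Definition \ref{splt}, which is precisely what allows Lemma \ref{blikea} at $i = 0$ to collapse into the statement that $B^*$ acts as $\theta^*_0 I$ on $U_0$.
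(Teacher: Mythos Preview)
Your proof is correct and follows essentially the same approach as the paper: establish the inclusion $U_0 \subseteq E'_0V$ via $(B^*-\theta^*_0 I)U_0=0$, then conclude equality from the dimension match furnished by Lemma~\ref{diag}. The paper's version is slightly terser (it does not bother to compute $\dim U_0=1$ explicitly, since Lemma~\ref{diag} already gives $\dim U_0=\dim E'_0V$ directly), but the logic is the same.
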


\begin{proof}
On one hand, since $(B^* - \theta^*_0I)U_0 = 0$, we have the containment $U_0 \subseteq E'_0V.$ On the other hand, Lemma \ref{diag} gives us that $\dim U_0 = \dim E'_0V$, so $U_0 = E'_0V$.
\end{proof}

\begin{lemma}\label{sharpps}
The sequence $\Phi' = (B; \{E_{i}\}_{i=0}^d; B^*; \{E'_{i}\}_{i=0}^{d})$ is a sharp parallel system.
\end{lemma}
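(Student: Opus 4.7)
To show $\Phi'$ is a sharp parallel system, I need to verify the three conditions in Definition \ref{parsys} together with the condition $\dim(E'_0 V) = 1$ from Definition \ref{sharps}. The plan is to check these four items in order, leaning on the lemmas already proved in Section \ref{linpert}.

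First I would handle the diagonalizability of $B$ and $B^*$. Since $B = A$ and $A$ is diagonalizable (being part of a tridiagonal pair), the $B$-side is immediate, and $\{E_i\}_{i=0}^d$ is automatically an ordering of the primitive idempotents of $B$. For $B^*$, I would invoke Lemma \ref{diag}, which establishes that $B^*$ is diagonalizable with eigenvalues $\{\theta^*_i\}_{i=0}^d$; then $\{E'_i\}_{i=0}^d$ is by definition an ordering of the primitive idempotents of $B^*$. This yields conditions (i)--(iii) of Definition \ref{parsys}, so $\Phi'$ is a parallel system.

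Next I would verify sharpness, which amounts to showing $\dim(E'_0 V) = 1$. By Lemma \ref{dim1B}, $E'_0 V = U_0$, so it suffices to compute $\dim(U_0)$. By Proposition \ref{dmeq} applied to the split decomposition $\{U_i\}_{i=0}^d$ of $\Phi$, we have $\dim(U_0) = \dim(E^*_0 V)$. Since $\Phi$ is a tridiagonal system, Proposition \ref{dm1} gives $\dim(E^*_0 V) = 1$. Chaining these equalities yields $\dim(E'_0 V) = 1$, establishing sharpness.

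There is essentially no obstacle here: the work has already been done in the preceding lemmas of this section. The proof is a straightforward assembly of Lemma \ref{diag}, Lemma \ref{dim1B}, Proposition \ref{dmeq}, and Proposition \ref{dm1}, with the observation $B = A$ taking care of the remaining conditions.
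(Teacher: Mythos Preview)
Your proposal is correct and follows essentially the same approach as the paper: use $B=A$ and Lemma \ref{diag} to verify conditions (i)--(iii) of Definition \ref{parsys}, then invoke Lemma \ref{dim1B} for sharpness. The only difference is that you spell out the chain $\dim(E'_0V)=\dim(U_0)=\dim(E^*_0V)=1$ via Propositions \ref{dmeq} and \ref{dm1}, whereas the paper takes $\dim(U_0)=1$ as already established and simply cites Lemma \ref{dim1B}.
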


\begin{proof}
By Definition \ref{pert} the map $B$ is diagonalizable with
primitive idempotents $\{E_i\}_{i=0}^d$. By Lemma \ref{diag} the map $B^*$ is diagonalizable with primitive idempotents
$\{E'_i\}_{i=0}^d$. Thus, by Definition \ref{parsys},  $\Phi'$ is a parallel system. Moreover, this parallel system is sharp because $\dim(E'_0V) = 1$ by Lemma \ref{dim1B}.
\end{proof}
\begin{definition}\label{tlinperttd}
\rm
Referring to Lemma \ref{sharpps}, we call
$\Phi'$ the \textit{$t$-linear perturbation} of $\Phi$.
\end{definition}

\noindent
Since $\Phi'$ is a sharp parallel system, by our discussion in Section \ref{pssp} we may refer to the split sequence $\{\zeta'_i\}_{i=0}^d$ associated with $\Phi'$.
\begin{lemma}\label{splittrelation}
The split sequences $\{\zeta_i\}_{i=0}^d$ of $\Phi$
and $\{\zeta'_i\}_{i=0}^d$ of $\Phi'$ are related by
$\zeta'_i = t^i \zeta_i$ for $0 \leq i \leq d$.
\end{lemma}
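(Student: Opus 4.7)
The plan is to realize both $\zeta_i$ and $\zeta'_i$ as eigenvalues of explicit maps acting on the one-dimensional space $U_0$, then relate the two maps using the hypothesis on $B^*$. Lemma \ref{sam} already achieves this for $\zeta_i$: it is the eigenvalue of
\[M_i := (A^* - \theta^*_1I)\cdots(A^* - \theta^*_iI)(A - \theta_{i-1}I)\cdots(A - \theta_0I)\]
on $U_0$. I would first establish the analogous statement for $\zeta'_i$, namely that $\zeta'_i$ is the eigenvalue of
\[M'_i := (B^* - \theta^*_1I)\cdots(B^* - \theta^*_iI)(A - \theta_{i-1}I)\cdots(A - \theta_0I)\]
on $U_0 = E'_0V$, where the identification of subspaces comes from Lemma \ref{dim1B}.

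To prove this analog I would replay the argument of Lemma \ref{sam} with $A^*$ and $E^*_0$ replaced by $B^*$ and $E'_0$ throughout. That argument needs only four ingredients, all readily available: (a) $U_0$ is invariant under $M'_i$, since $(A - \theta_{i-1}I)\cdots(A - \theta_0I)U_0 \subseteq U_i$ by (\ref{ladderincA}) and Lemma \ref{blikea} then walks the result back down through $U_{i-1}, U_{i-2}, \ldots, U_0$; (b) $B^* E'_0 = \theta^*_0 E'_0$, which gives $(B^* - \theta^*_1I)\cdots(B^* - \theta^*_iI)E'_0 = (\theta^*_0 - \theta^*_1)\cdots(\theta^*_0 - \theta^*_i)E'_0$; (c) $E'_0$ commutes with $B^*$; and (d) $E'_0$ acts as the identity on $U_0 = E'_0V$.

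With the analog in hand the conclusion is immediate. For $v \in U_0$, set $w = (A - \theta_{i-1}I)\cdots(A - \theta_0I)v$, which lies in $U_i$ by (\ref{ladderincA}). Applying Lemma \ref{pwrt} at $w$ gives
\[(B^* - \theta^*_1I)\cdots(B^* - \theta^*_iI)w = t^i(A^* - \theta^*_1I)\cdots(A^* - \theta^*_iI)w,\]
so $M'_i v = t^i M_i v = t^i \zeta_i v$ by Lemma \ref{sam}. Since $U_0$ is one-dimensional this forces $\zeta'_i = t^i \zeta_i$, as desired. The one subtlety to watch is that $\Phi'$ is presently only known to be a sharp parallel system, not a tridiagonal system, so $\{U_i\}_{i=0}^d$ is not a priori its split decomposition and Lemma \ref{sam} cannot be invoked for $\Phi'$ off the shelf; however, the proof of that lemma rests only on the four ingredients listed above, so reproving it in the perturbed setting is a matter of bookkeeping rather than a new idea.
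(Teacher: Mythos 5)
Your proposal is correct and follows essentially the same route as the paper: both arguments identify $\zeta'_i$ with the scalar by which $(B^* - \theta^*_1I)\cdots(B^*-\theta^*_iI)\tau_i(B)$ acts on the one-dimensional space $U_0 = E'_0V$, then invoke Lemma \ref{pwrt} on $U_i$ to extract the factor $t^i$ and Lemma \ref{sam} to recognize $\zeta_i$. The only cosmetic difference is that the paper obtains the intermediate identity by factoring $E'_0$ via the primitive idempotent formula (\ref{primid}), whereas you replay the proof of Lemma \ref{sam} for the perturbed pair --- a substitution you correctly flag as requiring justification since $\Phi'$ is so far only a sharp parallel system, and which goes through because $U_0 = E'_0V$ by Lemma \ref{dim1B}.
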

\begin{proof}
Recall from the discussion following Definition \ref{sharps} that for $0 \leq i \leq d$, there exists a scalar $\chi'_i$ such that the map
\begin{equation}\nonumber\label{mep}E'_0\tau_i(B)\end{equation} acts on $E'_0V$ as $\chi'_iI$. From Definition \ref{thesplit} we have
\begin{equation}\label{zpdef}\zeta'_i = (\theta^*_0 - \theta^*_1)(\theta^*_0 - \theta^*_2)\cdots(\theta^*_0 - \theta^*_i)\chi'_i.\end{equation} Applying formula (\ref{primid}) to $E'_0$ we find that $E'_0$ is equal to
\begin{equation}\label{fhalf}
\prod_{j = i+1}^d \frac{B^* - \theta^*_jI}{\theta^*_0 - \theta^*_j}
\end{equation}
times
\begin{equation}\label{shalf}
\prod_{j = 1}^i \frac{B^* - \theta^*_jI}{\theta^*_0 - \theta^*_j}.
\end{equation}
Recall $E'_0V = U_0$ from Lemma \ref{dim1B}. By (\ref{ladderincA}), we have that $\tau_i(B)U_0 \subseteq U_i.$
By Lemma \ref{pwrt}, the map (\ref{shalf}) sends $U_i$ into $U_0$, and the map (\ref{fhalf}) acts on $U_0$ as the identity.
Combining these observations with Lemma \ref{pwrt}, we find that the following equation holds on $U_0$:
\begin{equation}\label{reorg}
    E'_0\tau_i(B) = \frac{t^i(A^* - \theta^*_1I) \cdots (A^*-\theta^*_iI)(B - \theta_{i-1}I)\cdots(B - \theta_1I)(B - \theta_0I)}{(\theta^*_0 - \theta^*_1)(\theta^*_0 - \theta^*_2)\cdots(\theta^*_0 - \theta^*_i)}.
\end{equation}
Consider the map which is the common
value in (\ref{reorg}). By Lemma \ref{sam}, this map acts on $U_0$ as
\begin{equation}\nonumber\label{imprel} \frac{t^i\zeta_i}{(\theta^*_0 - \theta^*_1)(\theta^*_0 - \theta^*_2)\cdots(\theta^*_0 - \theta^*_i)}
\end{equation}
times the identity. By this and (\ref{zpdef}) we find that $\zeta'_i = t^i \zeta_i$ for $0 \leq i \leq d$.
\end{proof}

\begin{corollary}\label{parampert}
The parameter array of $\Phi'$ is the sequence $(\{\theta_i\}_{i=0}^d, \{\theta^*_i\}_{i=0}^d,$ $ \{t^i\zeta_i\}_{i=0}^d)$, where $(\{\theta_i\}_{i=0}^d, \{\theta^*_i\}_{i=0}^d,$ $ \{\zeta_i\}_{i=0}^d)$ is the parameter array of $\Phi$.
\end{corollary}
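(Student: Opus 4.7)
The proof plan is to assemble the three components of the parameter array from results already in hand, since very little remains to be done after Lemma \ref{splittrelation}. Recall from the definition of parameter array following Lemma \ref{mtdtr} that the parameter array of $\Phi'$ consists of (a) the eigenvalue sequence of $B$ in the ordering determined by $\{E_i\}_{i=0}^d$, (b) the dual eigenvalue sequence of $B^*$ in the ordering determined by $\{E'_i\}_{i=0}^d$, and (c) the split sequence of $\Phi'$.

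First I would handle (a): by Definition \ref{pert} we have $B = A$, and the ordering $\{E_i\}_{i=0}^d$ of the primitive idempotents of $B$ is the same as the ordering for $A$ in $\Phi$; by (\ref{comm}) the eigenvalue of $B$ associated with $E_i$ equals $\theta_i$, giving the first coordinate $\{\theta_i\}_{i=0}^d$. Next I would handle (b): by Lemma \ref{diag}, $B^*$ is diagonalizable with eigenvalues $\{\theta^*_i\}_{i=0}^d$, and by the definition of $E'_i$ stated just before Lemma \ref{dim1B}, $E'_i$ is the primitive idempotent of $B^*$ associated with $\theta^*_i$; this yields the second coordinate $\{\theta^*_i\}_{i=0}^d$. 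Finally, (c) is immediate from Lemma \ref{splittrelation}, which states $\zeta'_i = t^i \zeta_i$ for $0 \leq i \leq d$.

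There is no real obstacle here; the corollary is a bookkeeping consequence of Lemma \ref{sharpps} (which guarantees $\Phi'$ is a sharp parallel system so that a parameter array is defined), Lemma \ref{diag} (fixing the dual eigenvalue sequence), and Lemma \ref{splittrelation} (fixing the split sequence). The proof will therefore consist of a single sentence citing these three lemmas.
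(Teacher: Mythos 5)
Your proposal is correct and matches the paper's own proof, which simply cites Definition \ref{pert}, Lemma \ref{diag}, and Lemma \ref{splittrelation}; you have merely spelled out the bookkeeping in more detail. No issues.
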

\begin{proof}
The result follows from Definition \ref{pert}, Lemma \ref{diag}, and Lemma \ref{splittrelation}. 
\end{proof}

\begin{lemma}\label{qser}
The maps $B$ and $B^*$ from Definition \ref{pert} satisfy the $q$-Serre relations.
\end{lemma}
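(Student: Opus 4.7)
Plan:

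The first $q$-Serre relation for $(B, B^*)$, namely (\ref{qSer1}) with $A^*$ replaced by $B^*$, is linear in $B^*$. Since $B^* = tA^* + (1-t)K$, by linearity it reduces to showing that $(A, A^*)$ satisfies (\ref{qSer1}), which is a hypothesis, and that $(A, K)$ satisfies (\ref{qSer1}) as well. The latter I would verify by iterating the commutation $qKA - q^{-1}AK = (q-q^{-1})I$ from Lemma \ref{yeet} to obtain $KA^i = q^{-2i}A^iK + (1-q^{-2i})A^{i-1}$ for $i = 1, 2, 3$, substituting into (\ref{qSer1}) with $A^*$ replaced by $K$, and collecting terms; both the coefficient of $A^3 K$ and that of $A^2$ collapse via the identity $[3]_q(q^{-4} - q^{-2}) = q^{-6} - 1$. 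The same type of computation yields (\ref{qSer2}) for $(A, K)$, and using the second commutation in Lemma \ref{yeet} gives both $q$-Serre relations for the pair $(A^*, K^{-1})$.

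The relation (\ref{qSer2}) for $(B, B^*)$, namely $(B^*)^3 A - [3]_q (B^*)^2 A B^* + [3]_q B^* A (B^*)^2 - A(B^*)^3 = 0$, is cubic in $B^*$, so linearity alone does not suffice. My plan is to exploit a scaling automorphism of $V$. Set $r = A - K^{-1}$ and $u = A^* - K$; by Definition \ref{splt}, $r$ sends $U_i$ into $U_{i+1}$ and $u$ sends $U_i$ into $U_{i-1}$, while $K$ acts as $\theta^*_i I$ on $U_i$. For each nonzero $\lambda \in \mathcal{K}$, let $\sigma_\lambda \colon V \to V$ be the invertible linear map that acts on $U_i$ as multiplication by $\lambda^i$. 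A direct check gives
\[
  \sigma_\lambda K \sigma_\lambda^{-1} = K, \qquad \sigma_\lambda r \sigma_\lambda^{-1} = \lambda r, \qquad \sigma_\lambda u \sigma_\lambda^{-1} = \lambda^{-1} u,
\]
so $\sigma_\lambda A \sigma_\lambda^{-1} = K^{-1} + \lambda r$ and $\sigma_\lambda A^* \sigma_\lambda^{-1} = K + \lambda^{-1} u$. Since (\ref{qSer2}) is preserved under conjugation, the hypothesis that $(A, A^*)$ satisfies (\ref{qSer2}) yields that $(K^{-1} + \lambda r, K + \lambda^{-1} u)$ satisfies (\ref{qSer2}) for every nonzero $\lambda$.

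Now assume $t \neq 0$ and set $\lambda = 1/t$, so that $K + \lambda^{-1} u = B^*$; the above specialization states that $(K^{-1} + t^{-1} r, B^*)$ satisfies (\ref{qSer2}). Let $L(X, Y)$ denote the left-hand side of (\ref{qSer2}) regarded as a function of $(X, Y)$; it is linear in $X$. Writing $A = (K^{-1} + t^{-1} r) + (1 - t^{-1}) r$, linearity gives $L(A, B^*) = L(K^{-1} + t^{-1} r, B^*) + (1 - t^{-1})\,L(r, B^*) = (1 - t^{-1})\,L(r, B^*)$, so it suffices to show $L(r, B^*) = 0$. The expression $L(K^{-1}, A^*)$ coincides with the left-hand side of (\ref{qSer1}) evaluated at $(A^*, K^{-1})$, and hence vanishes by paragraph one; combined with $L(A, A^*) = 0$ and linearity, this forces $L(r, A^*) = 0$. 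Conjugating this last identity by $\sigma_{1/t}$ (which scales $r$ by $t^{-1}$ and sends $A^*$ to $B^*$) yields $t^{-1} L(r, B^*) = 0$, so $L(r, B^*) = 0$, completing the case $t \neq 0$. When $t = 0$ we have $B^* = K$ and $L(A, K) = 0$ from the first paragraph.

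The main obstacle is spotting the scaling automorphism $\sigma_\lambda$ together with the decomposition $A = K^{-1} + r$, $A^* = K + u$ on which it acts diagonally; once these are in place, everything reduces to linearity in one slot of each $q$-Serre relation together with the conjugation invariance of these relations.
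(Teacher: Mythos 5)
Your proof is correct, and for the harder half it takes a genuinely different route from the paper. For the first $q$-Serre relation your argument is the same as the paper's: the relation is linear in $B^*$, so it reduces to the relation for $(A,A^*)$ (a hypothesis) and for $(A,K)$, which you verify by iterating the left relation of (\ref{comrel}); your formula $KA^i=q^{-2i}A^iK+(1-q^{-2i})A^{i-1}$ and the cancellation via $[3]_q(q^{-2}-q^{-4})=1-q^{-6}$ both check out. For the second relation, which is cubic in $B^*$, the paper expands and sorts the result into four pieces homogeneous in $(A^*,K)$ of bidegrees $(3,0),(2,1),(1,2),(0,3)$, and kills each piece by pushing the $K$'s to one side using both relations in (\ref{comrel}) --- a uniform but computation-heavy argument. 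You instead decompose $A=K^{-1}+r$ and $A^*=K+u$ into diagonal plus raising/lowering parts relative to the split decomposition (correct, since $K^{-1}$ acts as $\theta_iI$ and $K$ as $\theta^*_iI$ on $U_i$), introduce the scaling maps $\sigma_\lambda$, and use that $\sigma_{1/t}$ carries $A^*$ to $B^*$; the cubic relation then follows from conjugation-invariance together with linearity in the $A$-slot, with $L(K^{-1},A^*)=0$ (the same commutator computation, applied to the right relation of (\ref{comrel})) used to isolate $L(r,A^*)=0$ before conjugating. This is conceptually cleaner and exposes why the perturbation preserves the relations, at the modest cost of introducing $\sigma_\lambda$, invoking invertibility of $K$ (harmless, as its eigenvalues $q^{d-2i}$ are nonzero and the paper already uses $K^{-1}$), and treating $t=0$ separately, where $B^*=K$ and the needed identity is the $(A,K)$ relation from your first paragraph. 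Both arguments are sound; yours trades the paper's four-term bookkeeping for a small amount of extra structure.
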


\begin{proof}
We first show that 
\begin{equation}\label{og}B^3B^* - [3]_qB^2B^*B + [3]_qBB^*B^2 - B^*B^3 = 0.\end{equation}
Expanding the left hand side of (\ref{og}) using (\ref{deftp}), we obtain $t$ times
\begin{equation}\label{fcom}A^3A^* - [3]_qA^2A^*A + [3]_qAA^*A^2 - A^*A^3\end{equation}
plus $1-t$ times
\begin{equation}\label{scom}
    A^3K - [3]_qA^2KA + [3]_qAKA^2 - KA^3.
\end{equation}
The expression (\ref{fcom}) is zero since $A$ and $A^*$ satisfy the $q$-Serre relations. We can show (\ref{scom}) is zero by using the relation on the left of (\ref{comrel}) to pull all the $K$'s to the right, whence the resulting expression will be zero. We have thus verified (\ref{og}).

\noindent 
Next, we show that 
\begin{equation}\label{anqrel}B^{*3}B - [3]_qB^{*2}BB^* + [3]_qB^*BB^{*2} - BB^{*3} = 0.\end{equation}
Expanding the left hand side of (\ref{anqrel}) using (\ref{deftp}), we find that the expression is equal to $t^3$ times
\begin{align}\label{sad1}
A^{*3}A - [3]_qA^{*2}AA^* + [3]_qA^*AA^{*2} - AA^{*3}
\end{align}
plus $t^2(1-t)$ times
\begin{equation}\label{sad2}
\begin{split}
\left(A^{*2}KA + A^*KA^*A + KA^{*2}A\right)  -  [3]_q\left(A^{*2}AK + A^*KAA^* + KA^*AA^*\right) 
\\
 + [3]_q\left(A^*AA^*K + A^*AKA^*  + KAA^{*2}\right) - \left(AA^{*2}K + AA^*KA^* + AKA^{*2}\right)
\end{split}
\end{equation}
plus $t(1-t)^2$ times
\begin{equation}\label{sad3}
\begin{split}
\left(A^{*}K^2A + KA^*KA + K^2A^{*}A\right)  -  [3]_q\left(A^{*}KAK + KA^*AK + K^2AA^*\right) 
\\
 + [3]_q\left(A^*AK^2 + KAA^*K  + KAKA^{*}\right) - \left(AA^{*}K^2 + AKA^*K + AK^2A^{*}\right)
\end{split}
\end{equation}
plus $(1-t)^3$ times
\begin{equation}\label{sad4}
K^{3}A - [3]_qK^{2}AK + [3]_qKAK^{2} - AK^{3}.
\end{equation}
We now show that each of (\ref{sad1}), (\ref{sad2}), (\ref{sad3}), and (\ref{sad4}) is zero. First, observe that (\ref{sad1}) is zero since $A$ and $A^*$ satisfy the $q$-Serre relations. For (\ref{sad2}), we may use both relations in (\ref{comrel}) to pull all instances of $K$ to the right, upon which the resulting expression will be zero. A similar calculation establishes that (\ref{sad3}) is zero. Finally, for (\ref{sad4}), we may once more use the relation on the left of (\ref{comrel}) to pull all the $K$'s to the right, whence the resulting expression is zero.

\noindent
Having established (\ref{sad1}), (\ref{sad2}), (\ref{sad3}), and (\ref{sad4}) are all zero, we obtain the relation (\ref{anqrel}). It follows that $B$ and $B^*$ satisfy the $q$-Serre relations.\end{proof}
\begin{lemma}\label{ord}
We have that 
 \begin{equation}\label{inc1}B^*E_iV \subseteq E_{i-1}V + E_iV + E_{i+1}V\end{equation}
    for all $0 \leq i \leq d$, where $E_{-1} = E_{d+1} = 0,$ and
 \begin{equation}\label{inc2}BE'_iV \subseteq E'_{i-1}V + E'_iV + E'_{i+1}V\end{equation}
    for all $0 \leq i \leq d$, where $E'_{-1} = E'_{d+1} = 0.$
\end{lemma}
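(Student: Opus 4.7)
The plan is to derive both inclusions from the $q$-Serre relations for $B$ and $B^*$ established in Lemma \ref{qser}, via the standard trick of sandwiching each relation between appropriate primitive idempotents. First I would rephrase (\ref{inc1}) equivalently as $E_jB^*E_i = 0$ for all $|i-j|>1$, and rephrase (\ref{inc2}) analogously as $E'_jBE'_i = 0$ for all $|i-j|>1$. These reformulations are equivalent to the original inclusions because $v = \sum_j E_j v$ holds for every $v \in V$ (and likewise for $E'_j$), so both statements reduce to the vanishing of certain scalars.

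For (\ref{inc1}) I would multiply the $q$-Serre relation $B^3B^* - [3]_qB^2B^*B + [3]_qBB^*B^2 - B^*B^3 = 0$ on the left by $E_j$ and on the right by $E_i$. Using the identities $E_jB = \theta_jE_j$ and $BE_i = \theta_iE_i$, each of the four terms collapses to a scalar multiple of $E_jB^*E_i$, producing a single equation $p(\theta_j, \theta_i)\,E_jB^*E_i = 0$, where $p(x,y) = x^3 - [3]_q x^2y + [3]_q xy^2 - y^3$. A short expansion shows $p(x,y) = (x-y)(x-q^2y)(x-q^{-2}y)$. Since $\theta_i = q^{2i-d}$ and $q$ is not a root of unity, the three roots of $p(\,\cdot\,,\theta_i)$ correspond precisely to $j \in \{i-1, i, i+1\}$, so $p(\theta_j,\theta_i)\neq 0$ whenever $|i-j|>1$, forcing $E_jB^*E_i = 0$.

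The argument for (\ref{inc2}) will be entirely parallel: I would multiply the companion relation $B^{*3}B - [3]_qB^{*2}BB^* + [3]_qB^*BB^{*2} - BB^{*3} = 0$ on the left by $E'_j$ and on the right by $E'_i$, use $E'_jB^* = \theta^*_jE'_j$ and $B^*E'_i = \theta^*_iE'_i$ to reduce everything to $p(\theta^*_j,\theta^*_i)\,E'_jBE'_i = 0$, and then observe that since $\theta^*_i = q^{d-2i}$, the roots of $p(\,\cdot\,,\theta^*_i)$ again occur exactly at $j \in \{i-1, i, i+1\}$. This gives $E'_jBE'_i = 0$ for $|i-j|>1$.

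The argument is essentially a direct computation, and I do not foresee any significant obstacle: the only care needed is in factoring the cubic $p(x,y)$ and matching its roots to the correct index shifts using the explicit eigenvalue formulas $\theta_i = q^{2i-d}$ and $\theta^*_i = q^{d-2i}$.
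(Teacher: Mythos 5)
Your proposal is correct and follows essentially the same route as the paper: both arguments start from the $q$-Serre relations for $B$, $B^*$ established in Lemma \ref{qser} and exploit the geometric eigenvalue sequences $\theta_i = q^{2i-d}$, $\theta^*_i = q^{d-2i}$ together with the factorization $x^3 - [3]_q x^2 y + [3]_q x y^2 - y^3 = (x-y)(x-q^2y)(x-q^{-2}y)$. The only cosmetic difference is that the paper multiplies the relation on the right by $E_i$ alone and reads off the inclusion from the resulting operator identity $(B-q^2\theta_i I)(B-\theta_i I)(B-q^{-2}\theta_i I)B^*E_i = 0$, whereas you sandwich between $E_j$ and $E_i$ to reduce to a scalar equation; the two formulations are equivalent.
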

\begin{proof}
By Lemma \ref{qser}, $B$ and $B^*$ satisfy the $q$-Serre relations. In particular we have
\begin{equation}\nonumber B^3B^* - [3]_qB^2B^*B + [3]_qBB^*B^2 - B^*B^3 = 0.\end{equation}
Observe that for $0 \leq i \leq d$,
\begin{align*}
\begin{split}
0 = \left(B^3B^* - [3]_qB^2B^*B + [3]_qBB^*B^2 - B^*B^3\right)E_i
\\
= B^3B^*E_i - [3]_q\theta_iB^*E_i + [3]_q\theta^2_iB^*E_i - \theta^3_iB^*E_i
\\
= (B - q^2\theta_iI)(B - \theta_iI)(B - q^{-2}\theta_iI)B^*E_i.
\end{split}
\end{align*}
Therefore
\begin{equation}\nonumber B^*E_iV \subseteq E_{i-1}V + E_iV + E_{i+1}V\end{equation}
for all $0 \leq i \leq d$. We have shown the inclusion (\ref{inc1}). The inclusion (\ref{inc2}) is similarly shown.
\end{proof}
\begin{flushleft}
Let us summarize our results so far. For a given scalar $t$, consider the elements $B$ and $B^*$ as in Definition $\ref{pert}$. We wish to determine when $(B, B^*)$ is a tridiagonal pair. Lemma \ref{qser} showed $B$ and $B^*$ satisfy the $q$-Serre relations, while Lemmas \ref{diag} and \ref{ord} showed they satisfy conditions (i)--(iii) of Definition \ref{TD}. It remains to determine when $B$ and $B^*$ satisfy condition (iv) of Definition \ref{TD}. This will be done in the next section.
\end{flushleft}

\section{The Drinfel'd Polynomial}\label{dfp}
Recall the tridiagonal system $\Phi$ from above Lemma \ref{yeet}. The elements $A$ and $A^*$ from $\Phi$ form a tridiagonal pair on $V$ that has $q$-Serre type. For a given scalar $t$ consider the $t$-linear perturbation $(B, B^*)$ of $(A, A^*)$ as shown in (\ref{deftp}). Our aim is to find a necessary and sufficient condition on $t$ for $(B, B^*)$ to be a tridiagonal pair. To this end, we define the following polynomial. From now on let $x$ denote an indeterminate. Recall the scalar $q$ from below Remark \ref{q}.

\begin{definition}\rm{(See \cite[Section~3.4]{anotdrf} and  \cite[Definition~4.2]{drff})}
\rm
Given a sequence of scalars $\{\zeta_i\}^d_{i = 0}$, the corresponding \textit{Drinfel'd polynomial} is given by
\begin{equation}\label{mando}P(x) = \sum_{i = 0}^d \frac{(-1)^i  \zeta_ix^i}{([i]^!_q)^2},\end{equation}
where \begin{equation}\nonumber[i]^!_q = \prod_{n = 1}^i [n]_q.\end{equation}
\end{definition}

\noindent
The Drinfel'd polynomial provides a concise interpretation of (\ref{nz4}) for $\Phi$ that will be useful in proving our main theorem.

\begin{lemma}\label{ccond}
For a sequence of scalars $\{\zeta_i \}_{i=0}^d$ the corresponding Drinfel'd polynomial $P$ satisfies
\begin{equation}\nonumber\label{potato}
    \sum_{i=0}^d \eta_{d-i}(\theta_0) \eta^*_{d-i}(\theta^*_0) \zeta_i = (-1)^d (\lbrack d \rbrack^!_q)^2 (q-q^{-1})^{2d}P\left(\frac{1}{(q-q^{-1})^{2}}\right),
\end{equation}
where $\theta_i = q^{2i-d}$ and $\theta^*_i = q^{d-2i}$ for $0 \leq i \leq d.$ We are using the notation (\ref{good1})--(\ref{good4}).
\end{lemma}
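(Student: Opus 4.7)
The plan is to reduce the lemma to a single closed-form identity for each coefficient $\eta_{d-i}(\theta_0)\eta^*_{d-i}(\theta^*_0)$, then match the resulting sum to the Drinfel'd polynomial evaluation term-by-term. Everything is driven by the explicit eigenvalues $\theta_j = q^{2j-d}$ and $\theta^*_j = q^{d-2j}$.

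First I would evaluate each factor $\theta_0 - \theta_j$ and $\theta^*_0 - \theta^*_j$. Writing $\theta_0 - \theta_j = q^{-d}-q^{2j-d}$ and factoring out $q^{j-d}$ gives $\theta_0 - \theta_j = -q^{j-d}(q^j - q^{-j}) = -q^{j-d}(q-q^{-1})[j]_q$. Dually, $\theta^*_0 - \theta^*_j = q^d - q^{d-2j} = q^{d-j}(q^j - q^{-j}) = q^{d-j}(q-q^{-1})[j]_q$ (no minus sign here). Using (\ref{good1}) and (\ref{good2}) we obtain
\[
\eta_{d-i}(\theta_0) = \prod_{j=i+1}^{d}(-q^{j-d}(q-q^{-1})[j]_q), \qquad \eta^*_{d-i}(\theta^*_0) = \prod_{j=i+1}^{d} q^{d-j}(q-q^{-1})[j]_q.
\]
Multiplying these two products, the $(-1)$ factors contribute $(-1)^{d-i}$, the $q$-powers cancel since $\sum_{j=i+1}^d(j-d) + \sum_{j=i+1}^d(d-j)=0$, the factors of $(q-q^{-1})$ combine to $(q-q^{-1})^{2(d-i)}$, and the $q$-integer products collapse via $\prod_{j=i+1}^d[j]_q = [d]^!_q/[i]^!_q$. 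The upshot is the clean identity
\[
\eta_{d-i}(\theta_0)\,\eta^*_{d-i}(\theta^*_0) = (-1)^{d-i}(q-q^{-1})^{2(d-i)}\left(\frac{[d]^!_q}{[i]^!_q}\right)^{\!2}.
\]

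Once this identity is in hand, I substitute it into the sum on the left-hand side and then pull the common factor $(-1)^d ([d]^!_q)^2 (q-q^{-1})^{2d}$ outside. Using $(-1)^{d-i} = (-1)^{d+i}$ and reorganizing, the remaining sum becomes
\[
\sum_{i=0}^{d} \frac{(-1)^i \zeta_i}{([i]^!_q)^2 (q-q^{-1})^{2i}},
\]
which by the Drinfel'd polynomial definition (\ref{mando}) equals $P\bigl(1/(q-q^{-1})^2\bigr)$, completing the proof.

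The argument is a purely computational verification; the only thing to watch is sign bookkeeping, and in particular the asymmetry that $\theta_0-\theta_j$ picks up a minus sign while $\theta^*_0-\theta^*_j$ does not (ultimately yielding the $(-1)^{d-i}$, which pairs with the $(-1)^i$ in (\ref{mando}) to produce the overall $(-1)^d$ on the right-hand side). There is no conceptual obstacle beyond executing this bookkeeping correctly.
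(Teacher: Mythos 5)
Your proposal is correct and follows essentially the same route as the paper: both compute $\eta_{d-i}(\theta_0)\eta^*_{d-i}(\theta^*_0)$ explicitly from the eigenvalue formulas, arrive at the same key identity $\eta_{d-i}(\theta_0)\,\eta^*_{d-i}(\theta^*_0) = (-1)^{d-i}\bigl([d]^!_q/[i]^!_q\bigr)^2(q-q^{-1})^{2d-2i}$, and then match the sum against (\ref{mando}). The only cosmetic difference is that you factor each term $\theta_0-\theta_j$ and $\theta^*_0-\theta^*_j$ into $[j]_q$ form separately, while the paper pairs corresponding factors and uses the identity $(q^{-d}-q^{d-2j})(q^d-q^{2j-d})=-(q^{d-j}-q^{j-d})^2$; both are sound.
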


\begin{proof}
We first compute $\eta_{d-i}(\theta_0)$ and $\eta^*_{d-i}(\theta^*_0)$ for $0 \leq i \leq d.$ Let $i$ be given. Expanding $\eta_{d-i}(\theta_0)$ in terms of $q$ using (\ref{good1}) yields
\begin{equation}\nonumber
    \eta_{d-i}(\theta_0) = (q^{-d} - q^d)(q^{-d} - q^{d-2})\cdots(q^{-d} - q^{2i+2-d}).
\end{equation}
Similarly, expanding $\eta^*_{d-i}(\theta^*_0)$ in terms of $q$ using (\ref{good2}) yields
\begin{equation}\nonumber
    \eta^*_{d-i}(\theta^*_0) = (q^d - q^{-d})(q^d - q^{-d + 2})\cdots(q^d - q^{d - 2i - 2}).
\end{equation}
Using the algebraic identity
\begin{equation}\nonumber(q^{-d} - q^{d - 2j})(q^d - q^{2j - d}) = -(q^{d-j} - q^{j-d})^2, \qquad \qquad j \in \mathbb{Z},\end{equation}
we obtain 
\begin{equation}\nonumber\eta_{d-i}(\theta_0)\eta^*_{d-i}(\theta^*_0) =  (-1)^{d-i}\left(\frac{[d]^!_q}{[i]^!_q}\right)^2 {(q - q^{-1})^{2d-2i}}.\end{equation}
The result follows in view of (\ref{mando}).
\end{proof}
\begin{corollary}\label{ycond}
Let $P$ denote the Drinfel'd polynomial associated with the split sequence $\{\zeta_i\}_{i=0}^d$ of $\Phi.$ Then $P$ satisfies
\begin{equation}\nonumber\label{condition}
    P\left(\frac{1}{(q-q^{-1})^2}\right) \neq 0.
\end{equation}
\end{corollary}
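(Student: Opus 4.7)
The plan is to derive the claim by combining Lemma \ref{ccond} with the second nonvanishing statement from Corollary \ref{nozer}. Specifically, I will apply Lemma \ref{ccond} to the split sequence $\{\zeta_i\}_{i=0}^d$ of the fixed tridiagonal system $\Phi$, which has eigenvalue sequence $\theta_i = q^{2i-d}$ and dual eigenvalue sequence $\theta^*_i = q^{d-2i}$ (exactly the setup of the lemma). This yields the identity
\begin{equation}\nonumber
\sum_{i=0}^d \eta_{d-i}(\theta_0)\eta^*_{d-i}(\theta^*_0)\zeta_i = (-1)^d ([d]^!_q)^2 (q-q^{-1})^{2d} P\!\left(\frac{1}{(q-q^{-1})^2}\right).
\end{equation}

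The next step is to observe that the left-hand side is nonzero: this is precisely the content of line (\ref{nz4}) in Corollary \ref{nozer}, which applies since $\Phi$ is a tridiagonal system and therefore has a split sequence satisfying the stated inequality. Consequently, the right-hand side must also be nonzero.

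To conclude, I will argue that the scalar factor $(-1)^d ([d]^!_q)^2 (q-q^{-1})^{2d}$ is nonzero, so that the value $P\bigl(1/(q-q^{-1})^2\bigr)$ itself must be nonzero. Here I will use the standing assumption that $q$ is a nonzero scalar which is not a root of unity: this implies $q - q^{-1} \neq 0$, and for each $n$ with $1 \leq n \leq d$ the quantum integer $[n]_q = (q^n - q^{-n})/(q - q^{-1})$ is nonzero, so $[d]^!_q \neq 0$.

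I do not foresee a serious obstacle in this argument; it is essentially a bookkeeping step that extracts the desired nonvanishing of $P$ at a specific point from the already-established nonvanishing of a weighted sum over the split sequence. The only subtlety is confirming that the prefactor produced by Lemma \ref{ccond} is indeed nonzero under our running hypotheses on $q$, which is immediate.
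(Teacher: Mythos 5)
Your argument is correct and is exactly the paper's proof: the paper also derives the claim by combining Lemma \ref{ccond} with line (\ref{nz4}) of Corollary \ref{nozer}, and your additional check that the prefactor $(-1)^d([d]^!_q)^2(q-q^{-1})^{2d}$ is nonzero (since $q$ is not a root of unity) is a valid, if implicit in the paper, bookkeeping step.
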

\begin{proof}
The result follows from Corollary \ref{nozer} and Lemma \ref{ccond}.
\end{proof}

\begin{lemma}\label{assocsplit}
Let $P$ and $P'$ denote the Drinfel'd polynomials associated with the split sequences $\{\zeta_i\}_{i=0}^d$ of $\Phi$ and $\{\zeta'_i\}_{i=0}^d$ of $\Phi'$ respectively. Then  \begin{equation}\nonumber
    P'(x) = P(tx).
\end{equation}
\end{lemma}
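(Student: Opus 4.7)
The plan is to prove this by direct computation from the definition of the Drinfel'd polynomial, using the relationship between the two split sequences already established in Lemma \ref{splittrelation}.

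First I would write out $P'(x)$ according to the defining formula (\ref{mando}), namely
\[
P'(x) = \sum_{i=0}^d \frac{(-1)^i \zeta'_i x^i}{([i]^!_q)^2}.
\]
Next, I would invoke Lemma \ref{splittrelation} to substitute $\zeta'_i = t^i \zeta_i$ for each $0 \leq i \leq d$. This gives
\[
P'(x) = \sum_{i=0}^d \frac{(-1)^i t^i \zeta_i x^i}{([i]^!_q)^2} = \sum_{i=0}^d \frac{(-1)^i \zeta_i (tx)^i}{([i]^!_q)^2},
\]
which is exactly the defining formula (\ref{mando}) for $P$ evaluated at $tx$. This establishes $P'(x) = P(tx)$.

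There is no real obstacle here; the statement is essentially a repackaging of Lemma \ref{splittrelation} through the definition of the Drinfel'd polynomial. The only conceptual point worth flagging is that the denominators $([i]^!_q)^2$ and the signs $(-1)^i$ do not depend on $t$, so the perturbation multiplies the coefficient of $x^i$ in $P$ by exactly $t^i$, which is precisely what a rescaling $x \mapsto tx$ produces.
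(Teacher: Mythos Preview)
Your proof is correct and follows exactly the same approach as the paper, which simply cites Lemma \ref{splittrelation} and the defining formula (\ref{mando}); you have just written out the one-line substitution explicitly.
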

\begin{proof}
The result follows from Lemma \ref{splittrelation} and (\ref{mando}).
\end{proof}

\begin{lemma}\label{tdimp}
Assume that $\Phi'$ is a tridiagonal system. Then both \begin{equation}\label{main1}t \neq 0,\end{equation}
    \begin{equation}\label{main2}P\left(\frac{t}{(q - q^{-1})^2}\right) \neq 0,\end{equation}
where $P$ refers to the Drinfel'd polynomial associated with the split sequence $\{\zeta_i\}_{i=0}^d$ of $\Phi.$
\end{lemma}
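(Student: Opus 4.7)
The plan is to apply the nonvanishing results already established for arbitrary tridiagonal systems to the system $\Phi'$, which we are assuming is tridiagonal. Since $\Phi'$ is a tridiagonal system, Corollary \ref{nozer} applies to its split sequence $\{\zeta'_i\}_{i=0}^d$. Lemma \ref{splittrelation} relates this sequence to $\{\zeta_i\}_{i=0}^d$ by $\zeta'_i = t^i \zeta_i$, and this identity is the bridge driving both conclusions.

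First I would establish (\ref{main1}). Applying the inequality (\ref{nz3}) of Corollary \ref{nozer} to the tridiagonal system $\Phi'$, we obtain $\zeta'_d \neq 0$. By Lemma \ref{splittrelation} we have $\zeta'_d = t^d \zeta_d$. Invoking (\ref{nz3}) for $\Phi$ itself gives $\zeta_d \neq 0$, so necessarily $t^d \neq 0$, and since $d \geq 1$ this forces $t \neq 0$.

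Next I would establish (\ref{main2}) via the Drinfel'd polynomial. Note that by Definition \ref{pert} and Lemma \ref{diag}, the eigenvalue and dual eigenvalue sequences of $\Phi'$ are identical to those of $\Phi$, namely $\theta_i = q^{2i-d}$ and $\theta^*_i = q^{d-2i}$. Thus Lemma \ref{ccond} is applicable to $\Phi'$, and combined with Corollary \ref{nozer} for $\Phi'$ we obtain (exactly as in the derivation of Corollary \ref{ycond}) that the Drinfel'd polynomial $P'$ associated with $\{\zeta'_i\}_{i=0}^d$ satisfies $P'\bigl(1/(q-q^{-1})^2\bigr) \neq 0$. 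By Lemma \ref{assocsplit} we have $P'(x) = P(tx)$, whence
\[
P\left(\frac{t}{(q - q^{-1})^2}\right) \;=\; P'\left(\frac{1}{(q - q^{-1})^2}\right) \;\neq\; 0,
\]
which is (\ref{main2}).

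There is no substantial obstacle in this argument: the heavy machinery has already been packaged into Lemma \ref{splittrelation} (transport of split sequences through the $t$-perturbation), Corollary \ref{nozer} (universal nonvanishing for tridiagonal systems), and Lemma \ref{assocsplit} (transport of Drinfel'd polynomials). The only point to take care of is confirming that Corollary \ref{nozer} and Lemma \ref{ccond} may be legitimately applied to $\Phi'$, which is justified because $\Phi'$ is a tridiagonal system with the same eigenvalue and dual eigenvalue sequences as $\Phi$.
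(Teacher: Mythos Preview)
Your proof is correct and follows essentially the same approach as the paper: apply Corollary \ref{nozer} to $\Phi'$ to get $\zeta'_d\neq 0$ and the sum condition, then use Lemma \ref{splittrelation} for $t\neq 0$ and Lemma \ref{assocsplit} (via Corollary \ref{ycond} applied to $\Phi'$) for the Drinfel'd polynomial inequality. The paper is slightly terser---it does not separately invoke $\zeta_d\neq 0$ for $\Phi$, since $t^d\zeta_d\neq 0$ already forces $t\neq 0$---but the logic is identical.
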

\begin{proof}
First we show (\ref{main1}). Since $\Phi'$ is a tridiagonal system, from (\ref{nz3}) we have that $\zeta'_d \neq 0$, where $\{\zeta'_i\}_{i=0}^d$ is the split sequence of $\Phi'$. By Lemma \ref{splittrelation}, $\zeta'_d = t^d\zeta_d$, so it follows that $t \neq 0$. We have shown (\ref{main1}).

\noindent
Next we show (\ref{main2}). By Corollary \ref{ycond} we have
\begin{equation}\label{cond2}P'\left(\frac{1}{(q - q^{-1})^2}\right) \neq 0,\end{equation}
where $P'$ refers to the Drinfel'd polynomial associated with the split sequence $\{\zeta'_i\}_{i=0}^d$. By Lemma \ref{assocsplit}, we have  $P'(x) = P(tx)$. By this and (\ref{cond2}) we obtain (\ref{main2}).
We have shown (\ref{main1}) and (\ref{main2}), as desired. 
\end{proof}

\noindent
Referring to Lemma \ref{tdimp}, let us now reverse the logical direction.
\begin{lemma}\label{pertmtd}
Assume that both
\begin{equation}t \neq 0\label{main11},\end{equation}
    \begin{equation}P\left(\frac{t}{(q - q^{-1})^2}\right) \neq 0,\label{main22}\end{equation} where $P$ denotes the Drinfel'd polynomial associated with the split sequence $\{\zeta_i\}_{i=0}^d$ of $\Phi$. Then $\Phi'$ is a sharp mock tridiagonal system.
\end{lemma}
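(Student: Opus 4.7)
The plan is to verify each of the six conditions in Definition \ref{MTDsys} for the sequence $\Phi'$, using results already established, and then check sharpness separately. Conditions (i)--(v) and the sharpness will follow almost immediately from prior lemmas; the main content is condition (vi), which uses the two hypotheses (\ref{main11}) and (\ref{main22}) in an essentially symmetric way via Lemma \ref{mtdtr} applied to $\Phi'$.

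First I would dispose of the easy conditions. Conditions (i)--(iii) of Definition \ref{MTDsys} are exactly the parallel system conditions verified in Lemma \ref{sharpps} (with $B$ diagonalizable since $B=A$, and $B^*$ diagonalizable by Lemma \ref{diag}). Conditions (iv) and (v) are precisely the inclusions (\ref{inc1}) and (\ref{inc2}) of Lemma \ref{ord}, translated into idempotent language. Finally, sharpness ($\dim(E'_0V)=1$) is Lemma \ref{dim1B}.

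The remaining task is condition (vi): showing $E'_0 E_0 E'_0$ and $E'_0 E_d E'_0$ are nonzero on $V$. Since $\dim(E'_0V)=1$, Lemma \ref{eqtr}(ii) reduces this to showing $\text{tr}(E_0 E'_0) \neq 0$ and $\text{tr}(E_d E'_0) \neq 0$. Because $\Phi'$ is a sharp parallel system (Lemma \ref{sharpps}) with associated split sequence $\{\zeta'_i\}_{i=0}^d$, I may apply Lemma \ref{mtdtr} to $\Phi'$. This gives
\begin{equation}
\nonumber
\zeta'_d = \eta^*_d(\theta^*_0)\tau_d(\theta_d)\,\text{tr}(E_d E'_0), \qquad \sum_{i=0}^d \eta_{d-i}(\theta_0)\eta^*_{d-i}(\theta^*_0)\zeta'_i = \eta^*_d(\theta^*_0)\eta_d(\theta_0)\,\text{tr}(E_0 E'_0).
\end{equation}
Since the $\theta_i$ are distinct and the $\theta^*_i$ are distinct, $\eta^*_d(\theta^*_0)$, $\tau_d(\theta_d)$, and $\eta_d(\theta_0)$ are all nonzero. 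Hence it suffices to show $\zeta'_d \neq 0$ and the left-hand sum is nonzero.

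For $\zeta'_d$, Lemma \ref{splittrelation} gives $\zeta'_d = t^d \zeta_d$, which is nonzero by hypothesis (\ref{main11}) together with $\zeta_d \neq 0$ from Corollary \ref{nozer}. For the sum, I would apply Lemma \ref{ccond} to the sequence $\{\zeta'_i\}_{i=0}^d$ (valid since the formulas in that lemma only use the scalars $\zeta_i$ and the eigenvalue sequences, and $\Phi'$ shares the same $\{\theta_i\}$ and $\{\theta^*_i\}$ as $\Phi$ by Corollary \ref{parampert}), obtaining
\begin{equation}
\nonumber
\sum_{i=0}^d \eta_{d-i}(\theta_0)\eta^*_{d-i}(\theta^*_0)\zeta'_i = (-1)^d([d]^!_q)^2(q-q^{-1})^{2d}\,P'\!\left(\frac{1}{(q-q^{-1})^2}\right),
\end{equation}
where $P'$ is the Drinfel'd polynomial of $\{\zeta'_i\}$. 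By Lemma \ref{assocsplit}, $P'(x) = P(tx)$, so $P'(1/(q-q^{-1})^2) = P(t/(q-q^{-1})^2)$, which is nonzero by hypothesis (\ref{main22}). Combining these pieces gives both traces nonzero, establishing (vi). The main subtlety, and the only place both hypotheses are genuinely used, is this final step converting the Drinfel'd polynomial condition for $\Phi'$ into the stated condition on $\Phi$ via the substitution $x \mapsto tx$.
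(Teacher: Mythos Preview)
Your proposal is correct and follows essentially the same approach as the paper: you verify (i)--(v) and sharpness via Lemmas \ref{sharpps}, \ref{diag}, \ref{ord}, and \ref{dim1B}, and for condition (vi) you invoke Lemma \ref{mtdtr} for $\Phi'$, combine Lemma \ref{splittrelation} with Corollary \ref{nozer} to get $\zeta'_d\neq 0$, and use Lemmas \ref{ccond} and \ref{assocsplit} to convert (\ref{main22}) into nonvanishing of the relevant trace, finishing with Lemma \ref{eqtr}. The paper's proof is organized identically, only slightly more terse.
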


\begin{proof}
Recall $\Phi'$ is a sharp parallel system from Lemma \ref{sharpps}, so it suffices to verify that $\Phi'$ satisfies conditions {\rm (iv)--(vi)} of Definition \ref{MTDsys}. Lemma \ref{ord} implies that $\Phi'$ satisfies conditions {\rm (iv)} and {\rm (v)}. We now show that $\Phi'$ satisfies condition {\rm (vi)}.

\noindent
Let $\{\zeta'_i\}_{i=0}^d$ denote the split sequence for $\Phi'$, and let $P'$ denote the Drinfel'd polynomial associated with $\{\zeta'_i\}_{i=0}^d.$ From Lemma \ref{splittrelation} we have that $\zeta'_i = t^i \zeta_i$ for $0 \leq i \leq d.$ From $\zeta'_d = t^d\zeta_d$ and (\ref{main11}), we have that $\zeta'_d \neq 0$. Using (\ref{main22}) and Lemma \ref{assocsplit}, we obtain \begin{equation}\nonumber\label{checkpoint}P'\left(\frac{1}{(q - q^{-1})^2}\right) \neq 0.\end{equation} Moreover, Lemmas \ref{mtdtr} and \ref{ccond} gives us that $\text{tr}(E_dE'_0)$ and $\text{tr}(E_0E'_0)$ are nonzero. By Lemma \ref{eqtr}, the maps $E'_0E_0E'_0$ and $E'_0E_dE'_0$ are nonzero. Thus, $\Phi'$ satisfies condition {\rm (vi)} of Definition \ref{MTDsys}. We have shown that $\Phi'$ is a sharp mock tridiagonal system, as desired.
\end{proof}
\begin{proposition}\label{dualpert}
Assume that there exists a tridiagonal system $\Phi^\vee$
that has the same diameter, eigenvalue sequence, and
dual eigenvalue sequence as $\Phi$. Further assume that there exists a nonzero scalar $t$ such that $\zeta^\vee_i = t^i \zeta_i$ for $0 \leq i \leq d$, where $\{\zeta_i\}_{i=0}^d$ and $\{\zeta^\vee_i\}_{i=0}^d$ are the split sequences of $\Phi$ and $\Phi^\vee$ respectively. Then  the underlying vector spaces of $\Phi$ and $\Phi^\vee$ have the same dimension. Moreover, the $t$-linear perturbation of $\Phi$ is a tridiagonal system isomorphic to $\Phi^\vee$, and the $t^{-1}$-linear perturbation of $\Phi^\vee$ is a tridiagonal system isomorphic to $\Phi$.
\end{proposition}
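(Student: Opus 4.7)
The plan is to apply Lemma \ref{pertmtd} to $\Phi$ with the scalar $t$, realizing the $t$-linear perturbation $\Phi'$ as a sharp mock tridiagonal system on the underlying space $V$ of $\Phi$ whose parameter array coincides with that of $\Phi^\vee$. Proposition \ref{dult} will then bound $\dim V^\vee \leq \dim V$, where $V^\vee$ is the underlying space of $\Phi^\vee$. A symmetric argument with $t^{-1}$ and the roles of $\Phi$ and $\Phi^\vee$ interchanged will supply the reverse inequality, after which the ``moreover'' clause of Proposition \ref{dult} will promote both mock tridiagonal systems to honest tridiagonal systems.

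To verify the hypotheses of Lemma \ref{pertmtd}, I would first note that the relation $\zeta^\vee_i = t^i \zeta_i$ together with the defining formula (\ref{mando}) immediately gives $P^\vee(x) = P(tx)$, where $P^\vee$ is the Drinfel'd polynomial associated with $\{\zeta^\vee_i\}_{i=0}^d$. Since $\Phi^\vee$ is a tridiagonal system sharing the eigenvalue and dual eigenvalue sequences of $\Phi$, Corollary \ref{ycond} applied to $\Phi^\vee$ gives $P^\vee\bigl(1/(q-q^{-1})^2\bigr) \neq 0$, which rewrites as $P\bigl(t/(q-q^{-1})^2\bigr) \neq 0$. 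Combined with the hypothesis $t \neq 0$, this is exactly what Lemma \ref{pertmtd} requires.

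By Lemma \ref{pertmtd}, $\Phi'$ is then a sharp mock tridiagonal system on $V$, and Corollary \ref{parampert} identifies its parameter array as $(\{\theta_i\}_{i=0}^d, \{\theta^*_i\}_{i=0}^d, \{t^i \zeta_i\}_{i=0}^d) = (\{\theta_i\}_{i=0}^d, \{\theta^*_i\}_{i=0}^d, \{\zeta^\vee_i\}_{i=0}^d)$, which is the parameter array of $\Phi^\vee$. Applying Proposition \ref{dult} to $\Phi'$ yields a vector space $V^{\ddag}$ with $\dim V^{\ddag} \leq \dim V$ carrying a sharp tridiagonal system $(\Phi')^{\ddag}$ with this same parameter array; Proposition \ref{same} then forces $(\Phi')^{\ddag}$ to be isomorphic to $\Phi^\vee$, so $\dim V^\vee = \dim V^{\ddag} \leq \dim V$.

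To close the argument, I would repeat the construction with the roles of $\Phi$ and $\Phi^\vee$ interchanged and scalar $t^{-1}$. Since $\Phi^\vee$ has the eigenvalue sequences specified in Proposition \ref{seq}, it is of $q$-Serre type, so the framework of Section \ref{linpert} applies to $\Phi^\vee$ with its own split decomposition and map $K^\vee$. The relation $\zeta_i = t^{-i} \zeta^\vee_i$, together with Corollary \ref{ycond} applied to $\Phi$, supplies the analogue of the hypotheses of Lemma \ref{pertmtd} for $\Phi^\vee$, and the parallel analysis yields $\dim V \leq \dim V^\vee$. Once $\dim V = \dim V^\vee$ is established, the final clause of Proposition \ref{dult} upgrades $\Phi'$ from a mock tridiagonal system to a tridiagonal system isomorphic to $(\Phi')^{\ddag}$, hence to $\Phi^\vee$, and the symmetric conclusion holds for the $t^{-1}$-linear perturbation of $\Phi^\vee$. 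The principal subtlety in this plan is the bookkeeping surrounding Proposition \ref{dult}, which on its own only furnishes a one-sided dimension bound; it is the symmetric second application that actually forces equality of dimensions and converts ``mock'' into ``genuine.''
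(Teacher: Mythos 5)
Your proposal is correct and follows essentially the same route as the paper's proof: verify the hypotheses of Lemma \ref{pertmtd} via Corollary \ref{ycond} applied to $\Phi^\vee$ and the identity $P^\vee(x)=P(tx)$, invoke Proposition \ref{dult} and Proposition \ref{same} to get the one-sided dimension bound, symmetrize with $t^{-1}$ to force equality, and then use the final clause of Proposition \ref{dult} to upgrade the mock system to a genuine tridiagonal system isomorphic to $\Phi^\vee$. The only cosmetic difference is that you derive $P^\vee(x)=P(tx)$ directly from (\ref{mando}) rather than routing through Corollary \ref{parampert} and Lemma \ref{assocsplit}.
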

\begin{proof}
Recall the underlying vector space $V$ for $\Phi$. Let $V^\vee$ denote the underlying vector space for $\Phi^\vee$. Let $P$ and $P^\vee$ denote the Drinfel'd polynomials associated with  $\{\zeta_i\}_{i=0}^d$ and $\{\zeta^\vee_i\}_{i=0}^d$ respectively. Since $\Phi^\vee$ is a tridiagonal system, Corollary \ref{ycond} implies that 
\begin{equation}\nonumber
    P^\vee\left(\frac{1}{(q-q^{-1})^2}\right) \neq 0.
\end{equation}
Recall from Definition \ref{tlinperttd} that $\Phi'$ is the $t$-linear perturbation of $\Phi.$ By Corollary  \ref{parampert}, $\Phi'$ and $\Phi^{\vee}$ share the same parameter array and hence the same Drinfel'd polynomial associated with the common split sequence, so by Lemma \ref{assocsplit} we obtain (\ref{main22}). Furthermore, (\ref{main11}) holds by assumption, so from Lemma \ref{pertmtd} we have that $\Phi'$ is a sharp mock tridiagonal system. By Proposition \ref{dult}, there exists a vector space $V^{\ddag}$ with $\dim{V^{\ddag}} \leq \dim{V}$ and a sharp tridiagonal system $\Phi^\ddag$ on $V^{\ddag}$ that shares the same parameter array as $\Phi'$.
The sharp tridiagonal systems $\Phi^\ddag$
and $\Phi^\vee$ have the same parameter array, so they are isomorphic by Proposition \ref{same}. Consequently $\dim{V^\ddag} = \dim{V^{\vee}}$,
so $\dim{V^\vee} \leq \dim{V}.$ Interchanging the roles of $\Phi$ and $\Phi^\vee$, we obtain that $\dim{V} \leq \dim{V^\vee}$. By these comments $\dim{V} = \dim{V^\vee}$. We have shown that $\dim{V} = \dim{V^\ddag}$. By this and Proposition \ref{dult}, we find that $\Phi'$ is a tridiagonal system isomorphic to
$\Phi^\ddag$. We mentioned earlier that $\Phi^\ddag$ is isomorphic to $\Phi^\vee$, so $\Phi'$ is isomorphic to
$\Phi^\vee$. Interchanging the roles of $\Phi$ and $\Phi^\vee$,
we find that the $t^{-1}$-linear perturbation of $\Phi^\vee$ is
a tridiagonal system isomorphic to $\Phi$.
\end{proof}
\noindent
In the statement of Proposition \ref{dualpert} we assumed that $\Phi^\vee$ exists. In the next result we find a necessary and sufficient condition for $\Phi^\vee$ to exist.
\begin{proposition}\label{pertex}
For a nonzero scalar $t$, there exists a tridiagonal system $\Phi^\vee$ that satisfies the
assumptions of Proposition \ref{dualpert} if and only if
\begin{equation} P\left(\frac{t}{(q - q^{-1})^2}\right) \neq 0. \label{main111}\end{equation}
\end{proposition}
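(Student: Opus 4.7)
The plan is to establish the biconditional by handling each implication separately, leveraging the infrastructure already assembled.

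For the forward (only-if) direction, I would start from the assumed $\Phi^\vee$ and deduce (\ref{main111}). Since $\Phi^\vee$ is a tridiagonal system whose eigenvalue and dual eigenvalue sequences match those of $\Phi$, Corollary \ref{ycond} applied to $\Phi^\vee$ gives $P^\vee\bigl(1/(q-q^{-1})^2\bigr) \neq 0$, where $P^\vee$ denotes the Drinfel'd polynomial of the split sequence $\{\zeta_i^\vee\}_{i=0}^d = \{t^i\zeta_i\}_{i=0}^d$. Substituting $\zeta_i^\vee = t^i\zeta_i$ into (\ref{mando}) yields the identity $P^\vee(x) = P(tx)$ (this is the one-line computation already carried out in Lemma \ref{assocsplit}), and evaluating at $x = 1/(q-q^{-1})^2$ produces (\ref{main111}).

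For the backward (if) direction, I would construct $\Phi^\vee$ explicitly from the $t$-linear perturbation $\Phi'$ of $\Phi$. Assuming $t \neq 0$ together with (\ref{main111}), Lemma \ref{pertmtd} gives that $\Phi'$ is a sharp mock tridiagonal system. Proposition \ref{dult} then produces a sharp tridiagonal system $\Phi^{\ddag}$ on a possibly smaller vector space $V^{\ddag}$ with the same parameter array as $\Phi'$, which by Corollary \ref{parampert} is $(\{\theta_i\}_{i=0}^d, \{\theta_i^*\}_{i=0}^d, \{t^i\zeta_i\}_{i=0}^d)$. Setting $\Phi^\vee := \Phi^{\ddag}$ delivers a tridiagonal system with the required diameter, eigenvalue sequence, dual eigenvalue sequence, and rescaled split sequence, so $\Phi^\vee$ satisfies the hypotheses of Proposition \ref{dualpert}.

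I do not anticipate a serious obstacle here, since the nontrivial work is already packaged in Lemma \ref{pertmtd} (which converts the polynomial nonvanishing into the mock tridiagonal property) and Proposition \ref{dult} (which upgrades a sharp mock tridiagonal system to an honest tridiagonal system with the same parameter array). The only point of bookkeeping to watch is that "same parameter array" genuinely supplies all four ingredients demanded by Proposition \ref{dualpert}, but this is immediate from unpacking the definition of a parameter array.
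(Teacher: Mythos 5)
Your proof is correct, and the ``if'' direction coincides with the paper's: assume (\ref{main111}), invoke Lemma \ref{pertmtd} to get that $\Phi'$ is a sharp mock tridiagonal system, pass to $\Phi^{\ddag}$ via Proposition \ref{dult}, read off the parameter array from Corollary \ref{parampert}, and set $\Phi^\vee := \Phi^{\ddag}$. Your ``only if'' direction, however, takes a shorter route than the paper. The paper first applies Proposition \ref{dualpert} to conclude that $\Phi'$ is a tridiagonal system (isomorphic to $\Phi^\vee$) and then feeds $\Phi'$ into Lemma \ref{tdimp}, which internally uses Corollary \ref{ycond} together with $P'(x)=P(tx)$. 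You instead apply the Corollary \ref{ycond} argument directly to $\Phi^\vee$ itself, using its given split sequence $\{t^i\zeta_i\}_{i=0}^d$, and never need to know that $\Phi'$ is a tridiagonal system. This is cleaner and avoids the heavier Proposition \ref{dualpert} entirely for this implication. The one bookkeeping point to make explicit is that Corollary \ref{ycond} is stated in the paper only for the fixed system $\Phi$; to apply it to $\Phi^\vee$ you should observe that its proof (Corollary \ref{nozer} plus the purely computational Lemma \ref{ccond}) goes through verbatim for any tridiagonal system with eigenvalue sequence $\{q^{2i-d}\}_{i=0}^d$ and dual eigenvalue sequence $\{q^{d-2i}\}_{i=0}^d$, which $\Phi^\vee$ has by hypothesis. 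With that remark in place, your argument is complete.
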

\begin{proof}
Suppose (\ref{main111}) holds. By Lemma \ref{pertmtd} we have that $\Phi'$ is a sharp mock tridiagonal system.
By Corollary \ref{parampert},  $\Phi'$ has parameter array $(\{\theta_i\}_{i=0}^d, \{\theta^*_i\}_{i=0}^d,$ $ \{t^i\zeta_i\}_{i=0}^d)$, where $(\{\theta_i\}_{i=0}^d, \{\theta^*_i\}_{i=0}^d,$ $ \{\zeta_i\}_{i=0}^d)$ is the parameter array of $\Phi.$ By Proposition \ref{dult} there exists a tridiagonal system $\Phi^{\ddag}$ sharing the same parameter array as $\Phi'.$ Define the tridiagonal system $\Phi^\vee=\Phi^\ddag$
and note that $\Phi^\vee$ satisfies the assumptions in Proposition \ref{dualpert}. The result is now proved in one direction.

\noindent
We now consider the opposite direction. Suppose that there exists a tridiagonal system $\Phi^\vee$ that satisfies the assumptions of Proposition \ref{dualpert}. Then by Proposition \ref{dualpert}, $\Phi'$ is a tridiagonal system isomorphic to $\Phi^{\vee}$. Applying Lemma \ref{tdimp}, it follows that (\ref{main111}) holds.
\end{proof}
\noindent
The following is our main result.
\begin{theorem}\label{maintheorem}
Let $\Phi = (A; \{E_{i}\}_{i=0}^d; A^*; \{E^*_{i}\}_{i=0}^{d})$ denote a tridiagonal system on $V$ with eigenvalue sequence $\{\theta_i\}_{i=0}^d = \{q^{2i-d}\}_{i=0}^d$ and dual eigenvalue sequence $\{\theta^*_i\}_{i=0}^d = \{q^{d-2i}\}_{i=0}^d$. For a given scalar $t$ consider the maps $B$ and $B^*$ from (\ref{deftp}) and the parallel system $\Phi'$ from Lemma \ref{sharpps}. Then the following are equivalent:
\begin{enumerate}
    \item[\rm (i)] the pair $(B, B^*)$ is a tridiagonal pair on $V$;
    \item[\rm (ii)] $\Phi'$ is a tridiagonal system;
    \item[\rm (iii)] both
    \begin{equation}\nonumber t \neq 0, \qquad \qquad P\left(\frac{t}{(q - q^{-1})^2}\right) \neq 0,\end{equation}
    where $P$ refers to the Drinfel'd polynomial associated with the split sequence $\{\zeta_i\}_{i=0}^d$ of $\Phi.$
\end{enumerate}
Moreover, assume that {\rm{(i)--(iii)}} hold. Then $\Phi'$ has eigenvalue sequence $\{\theta_i\}_{i=0}^d$, 
dual eigenvalue sequence $\{\theta^*_i\}_{i=0}^d$, and split sequence $\{t^i\zeta_i\}_{i=0}^d$. In particular, $\Phi'$ and $(B, B^*)$ have $q$-Serre type.
\end{theorem}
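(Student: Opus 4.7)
The plan is to establish the cycle (i) $\Leftrightarrow$ (ii) and (ii) $\Leftrightarrow$ (iii) using the substantial infrastructure already assembled. For (i) $\Leftrightarrow$ (ii), note that by Lemmas \ref{diag} and \ref{ord}, the pair $(B, B^*)$ already satisfies conditions (i), (ii), (iii) of Definition \ref{TD}, and by Lemma \ref{sharpps} together with Lemma \ref{ord}, the sequence $\Phi'$ satisfies conditions (i)--(v) of Definition \ref{TDsys}. The only remaining condition on each side is the irreducibility condition, which is condition (iv) of Definition \ref{TD} and condition (vi) of Definition \ref{TDsys}, and these are literally the same statement about $B$ and $B^*$. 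Thus (i) and (ii) are equivalent; alternatively this equivalence can be phrased via Lemma \ref{dual} by observing that $\{E_iV\}_{i=0}^d$ and $\{E'_iV\}_{i=0}^d$ provide standard orderings of the eigenspaces of $B$ and $B^*$.

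Next I would dispatch the equivalence (ii) $\Leftrightarrow$ (iii) by invoking the two propositions at the end of Section \ref{dfp}. The direction (ii) $\Rightarrow$ (iii) is exactly the content of Lemma \ref{tdimp}. For the reverse direction (iii) $\Rightarrow$ (ii), Proposition \ref{pertex} produces from hypothesis (iii) a tridiagonal system $\Phi^\vee$ fulfilling the assumptions of Proposition \ref{dualpert}, and then Proposition \ref{dualpert} concludes that $\Phi'$ itself is a tridiagonal system (in fact isomorphic to $\Phi^\vee$).

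It remains to verify the ``Moreover'' clause under the assumption that (i)--(iii) hold. The eigenvalue sequence of $\Phi'$ equals $\{\theta_i\}_{i=0}^d$ because $B = A$ by Definition \ref{pert}; the dual eigenvalue sequence equals $\{\theta^*_i\}_{i=0}^d$ by Lemma \ref{diag}; and the split sequence is $\{t^i \zeta_i\}_{i=0}^d$ by Lemma \ref{splittrelation}. Lemma \ref{qser} gives that $B, B^*$ satisfy the $q$-Serre relations, and since the eigenvalue sequence $\{q^{2i-d}\}_{i=0}^d$ and dual eigenvalue sequence $\{q^{d-2i}\}_{i=0}^d$ are of the geometric form required by Proposition \ref{seq}, the pair $(B, B^*)$ is of $q$-Serre type, completing the theorem.

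There is no serious obstacle remaining: the proof is essentially a bookkeeping assembly of prior results, and the only subtle point is recognizing that the irreducibility clauses in Definitions \ref{TD} and \ref{TDsys} match directly so that (i) $\Leftrightarrow$ (ii) is immediate once Lemmas \ref{diag}, \ref{ord}, and \ref{sharpps} are in hand.
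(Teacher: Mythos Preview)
Your proposal is correct and follows essentially the same route as the paper's proof: the paper cites Lemmas \ref{dual} and \ref{ord} for (i)\,$\Leftrightarrow$\,(ii), Lemma \ref{tdimp} for (ii)\,$\Rightarrow$\,(iii), and Propositions \ref{dualpert} and \ref{pertex} for (iii)\,$\Rightarrow$\,(ii), exactly as you do. For the ``Moreover'' clause the paper invokes Corollary \ref{parampert} (which bundles together your separate citations of Definition \ref{pert}, Lemma \ref{diag}, and Lemma \ref{splittrelation}) and Proposition \ref{seq}; your version simply unpacks this a bit more explicitly.
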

\begin{proof}
$\rm{(i)}\Leftrightarrow \rm{(ii)}$ By Lemmas \ref{dual} and \ref{ord}.

\noindent
$\rm{(ii)}\Rightarrow \rm{(iii)}$ By Lemma \ref{tdimp}.

\noindent
$\rm{(iii)}\Rightarrow \rm{(ii)}$ By Propositions \ref{dualpert} and \ref{pertex}.

\noindent
Now assume that {\rm(i)--(iii)} hold. Then the final assertions of the theorem statement follow from Proposition \ref{seq} and Corollary \ref{parampert}.\end{proof}

\section{Acknowledgements}
The author would like to express immense gratitude to Professor Paul Terwilliger from the University of Wisconsin-Madison for his mentorship and numerous valuable suggestions.

\end{document}